\documentclass[12pt]{article}
\usepackage[utf8]{inputenc}
\usepackage{mathrsfs,amsthm,graphicx,color,verbatim,bbm,amsmath,amsfonts,amssymb,newclude,nicefrac,amsfonts,graphicx,xcolor,enumerate,hyperref,cleveref,tikz,pdfpages,bm,authblk}
\usepackage[ margin=1in]{geometry}

\title{Compositional Approximation Can Strictly Outperform Superpositional Approximation}
\author{\footnotesize \textbf{Dennis Elbr\"achter} and \textbf{ Philipp Petersen}}
\affil{\scriptsize Faculty of Mathematics, University of Vienna, Austria}

\date{}

\newtheorem{theorem}{Theorem}[section]
\newtheorem{corollary}[theorem]{Corollary}
\newtheorem{lemma}[theorem]{Lemma}
\newtheorem{proposition}[theorem]{Proposition}

\newtheorem{definition}[theorem]{Definition}
\newtheorem{remark}[theorem]{Remark}

\newcommand{\N}{\mathbb{N}}
\newcommand{\R}{\mathbb{R}}
\newcommand{\cR}{\mathcal{R}}
\newcommand{\cH}{\mathcal{H}}

\newcommand{\eps}{\varepsilon}
\renewcommand{\d}{\mathrm{d}}
\renewcommand{\phi}{\varphi}
\newcommand{\C}{\mathcal{C}}

\newcommand{\D}{\mathcal{D}}
\newcommand{\M}{\mathcal{M}}

\newcommand{\ba}{\bm{a}}
\newcommand{\bb}{\bm{b}}

\DeclareMathOperator*{\argmin}{arg\,min}
\DeclareMathOperator*{\argmax}{arg\,max}
\DeclareMathOperator*{\infp}{\mathrm{inf\vphantom{p}}}

\newcommand{\<}{\langle}
\renewcommand{\>}{\rangle}
\newcommand{\ip}[1]{\left\langle#1\right\rangle}

\newcommand{\cN}{\mathcal{N}}
\newcommand{\cS}{\mathcal{S}}
\newcommand{\cC}{\mathcal{C}}
\newcommand{\cP}{\mathcal{P}}
\newcommand{\cO}{\mathcal{O}}
\newcommand{\cD}{\mathcal{D}}
\newcommand{\cM}{\mathcal{M}}
\newcommand{\cL}{\mathcal{L}}
\newcommand{\cB}{\mathcal{B}}

\newcommand{\fS}{\mathfrak{S}}

\newcommand{\sgamma}{\gamma^{\mathrm{super}}}

\newcommand{\ogamma}{\gamma^{\mathrm{orth}}}
\newcommand{\Rgamma}{\gamma^{\mathrm{Riesz}}}

\begin{document}
\maketitle

\begin{abstract}
    Many classically studied function classes are known to be approximated optimally by superpositional methods, i.e.\@ with approximants constructed as the linear combination of elements in some dictionary.
    Here optimality means that the uniform approximation error viewed as a function of the number of parameters used has polynomial decay of the highest order achievable by any parametrized method whose parameters can be encoded as a bit string of length proportional, up to logarithmic factors, to the number of parameters.  
    While compositional methods like neural networks are structurally different, their approximation rates can be made comparable by imposing constraints that ensure such a proportional bit string encoding.
    In this work we study function classes exhibiting structural properties that limit superpositional approximation rates to be strictly lower than compositional approximation rates.
    In particular, we construct explicit examples for which there is an arbitrarily large gap.   
    
\end{abstract}

\newcommand{\bettersim}{{\,\scriptstyle \sim\,\,}}
\section{Introduction}

While there exists undeniable empirical evidence for the power of neural networks, their approximation-theoretic justification is still rather unclear. Even though there has been an abundance of positive expressivity results for neural networks\footnote{While there are too many to give an anywhere near complete list, we refer to the survey \cite{DeVore_Hanin_Petrova_2021} as well as \cite{yarotsky:2016ReLU} and the many works based on it.}, they are generally based on emulating established approaches, and therefore achieve rates which are as good as existing methods, but not better. 
While the breadth of results suggests a certain universal applicability, this is contrasted by the drawback of the extreme instability of their parametrization \cite{Top_prop,NEURIPS2019_degen}. 
As such, it remains rather difficult to argue from an approximation theory perspective why neural networks should be preferable unless they can make up for their poor parametrization by exhibiting strictly better rates than stably parametrized methods like $M$-term approximation using a well-behaved dictionary, e.g.\@ a frame or even an orthonormal basis. This is somewhat complicated by the observation that for many classically studied function classes, it has been shown that there are dictionaries which yield optimal $M$-term approximation rates \cite{DDDV}.

Our work is motivated by the idea that in order to better understand neural networks, and more generally compositional approximation, it is essential to study function classes for which they provide superior rates of approximation. 
In this paper we focus on developing a technique to obtain upper bounds on the best $M$-term approximation rate achievable by any dictionary $\cD=(\phi_i)_{i\in\N}$ for a given function class $\cC$ in some Hilbert space $\cH$. The central result that comes from this is recorded in \Cref{thm:main}.  
For now, we are not able to bound the supremum over the approximation rates of all possible dictionaries, i.e.\@ countable ordered subsets of $\cH$, but only those which satisfy a lower Riesz bound. We believe this limitation to be technical rather than fundamental. 

We take some inspiration from the study of smoothness spaces, where it has been shown that for balls of certain types of Besov spaces nonlinear approximation rates can be strictly larger than the linear rate \cite{DeVore_1998}. 
From an asymptotic geometric perspective this can be interpreted as follows. We have a function class that, for any given error $\eps>0$, can be $\eps$-approximated by a linear subspace of dimension $d_\eps\bettersim\eps^{-\frac{1}{\gamma_{\mathrm{linear}}}}$. Moreover, the function class can also be $\eps$-approximated nonlinearly via linear combinations of $M_\eps\bettersim\eps^{-\frac{1}{\gamma_{\mathrm{nonlinear}}}}$ many elements from some dictionary under a polynomial depth search constraint, i.e.\@ the functions in the $M_\eps$-term combinations are chosen from the first $\pi(M_\eps)$ elements of the dictionary, where $\pi$ is some polynomial. 
Geometrically speaking, the set of $M_\eps$-term combinations constitutes a union of $\binom{\pi(M_\eps)}{M_\eps}$ many $M_\eps$-dimensional linear subspaces. We can now compare the ``size'' of the $\eps$-approximating linear subspace and the $\eps$-approximating union of lower-dimensional linear subspaces, where we take our notion of size to be the order of the $\eps$-covering numbers of the respective sets. 
For the $d_\eps$-dimensional linear spaces the covering numbers behave like $\bettersim\! \eps^{-d_\eps}$, whereas unions of $M_\eps$-dimensional linear subspaces have covering numbers $\lesssim\! \eps^{-\deg(\pi)M_\eps\log(M_\eps)}$. Under the assumption of $\gamma_{\mathrm{nonlinear}}>\gamma_{\mathrm{linear}}$ we thus have 
\begin{align*}
    \eps^{-\deg(\pi)M_\eps\log(M_\eps)}\in o(\eps^{-d_\eps}),
\end{align*}
which means that the nonlinear sets that are sufficient to $\eps$-approximate the function class are vanishingly small relative to the size of the necessary linear space as $\eps\to 0$.

The above observation can be viewed as an underlying structural property of the function class. For a nonlinear approximation rate of a function class to be strictly larger than the best linear rate, 
finite-dimensional approximations of this function class need to be found in an asymptotically vanishingly small subset\footnote{Note that in the argument above the $M_\eps$-term approximations do not inherently need to be contained in the corresponding $d_\eps$-dimensional linear space. We could, however, just 
consider their orthogonal projection into the linear space, which would only introduce an additional $\cO(\eps)$ error and thus not significantly change the argument.} with a specific structure, in this case being a union of a suitably limited number of lower-dimensional subspaces. In the above example, this 
structural property allows for better nonlinear than linear approximation rates. Our goal is now to utilize this asymptotic geometric perspective to determine structural properties of a function class which can be exploited by compositional approximation but not by superpositional approximation. 
It is important to note that our notion of compositional approximation has constraints which, similar to polynomial depth search, limit the sets of elements that can be approximated using $M$ parameters to have covering numbers whose logarithm is of order $M$ up to logarithmic terms, i.e.\@ they cannot be significantly larger than in the superpositional setting. The crucial point is that we want to determine types of function classes where compositional approximation is more adapted to take advantage of their underlying asymptotic geometric structure.

While we will introduce a concrete neural network model in order to give explicit examples, it should be noted that we are not particularly interested in any specific variant of neural networks, but more generally in the structural property that approximants are assembled as a composition of simple building blocks instead of a superposition of simple building blocks. Section \ref{sec:framework} introduces our mathematical framework, Section \ref{sec:supLims} develops a way to bound superpositional approximation rates\footnote{For the main result a somewhat restricted notion of superpositional rate is required.} of function classes containing certain problematic sets, and Section \ref{sec:constrOfSpoons} constructs concrete examples of such function classes. A number of technical lemmas and proofs are deferred to the appendix. 

\section{Framework}\label{sec:framework}
\subsection{General Notation and Terminology}
For $r\in\R_+$, we write $[r]=\{1,\dots,\lfloor r \rfloor\}$. We denote by $\cP_+$ the set of all univariate polynomials with non-negative coefficients. We will be employing Landau notation to indicate the asymptotic behavior of functions, namely we define 
\begin{align*}
    \cO_{M\to\infty}(f(M))&:=\{g\colon \lim_{M\to\infty}\tfrac{g(M)}{f(M)}<\infty\}\subseteq\{g\colon\N\to\R\},\\
    \cO_{\eps\to 0}(f(\eps))&:=\{g\colon\lim_{\eps\to 0}\tfrac{g(\eps)}{f(\eps)}<\infty\}\subseteq\{g\colon(0,\infty)\to\R\},\\
    o_{M\to\infty}(f(M))&:=\{g\colon \lim_{M\to\infty}\tfrac{g(M)}{f(M)}=0\}\subseteq\{g\colon\N\to\R\},\\
    o_{\eps\to 0}(f(\eps))&:=\{g\colon \lim_{\eps\to 0}\tfrac{g(\eps)}{f(\eps)}=0\}\subseteq\{g\colon(0,\infty)\to\R\},
\end{align*}
where we may omit the index if the argument and asymptotic direction are clear from context. We will use $\|\cdot\|=\|\cdot\|_{\cH}$ and $\<\cdot,\cdot\>=\<\cdot,\cdot\>_{\cH}$ in order to denote the norm and inner product, respectively, in the Hilbert space $\cH$, where we may omit the subscript $\cH$ for simplicity of presentation. Any norm that is not the norm of the ambient Hilbert space will be explicitly denoted as such. We also use the brackets $\<x_1,\dots,x_n\>$ to denote the linear span of a set of vectors, as usage should be clear from context. We will also use $\|\cdot\|_0$ to denote the number of nonzero entries of a vector, even though it is not actually a norm. For matrices, we will use $\|A\|_\infty:=\max_{i,j}|A_{i,j}|$. We denote the unit ball as $B_1(\cH)$ and the unit sphere as $S_1(\cH)$.

\subsection{Approximation schemes and rates}

We begin by introducing the notion of an approximation scheme, which is essentially just a general way to formalize the notion of approximations with a budget $M$ without needing to specify in advance what kind of approximants are used.
Our results crucially rely on notions of orthogonality, and, as such, we will always be working in a Hilbert space. For the results on superpositional approximation the specific nature of the Hilbert space is largely irrelevant, whereas for compositional approximation to make sense we, of course, need a Hilbert space of functions, e.g.\@ the Lebesgue space of square-integrable functions on some domain in $\R^d$. 

\begin{definition}[Approximation Scheme]
Let $\cH$ be a Hilbert space. An \textbf{approximation scheme} is a sequence $\cS=(S_M)_{M\in\N}$ of sets $S_M\subseteq\cH$. We denote by $\fS(\cH)$ the set of all approximation schemes.
\end{definition}

With this in hand, we define the approximation rate of a scheme to be the polynomial order of decay of the uniform approximation error as a function of the budget $M$. We note that taking the supremum in \eqref{eq:app_rate_def} absorbs logarithmic terms, as $\log(x)\in x^\delta$ for every $\delta>0$. This means that, for $k\in\R$, approximation-error decays of $M^{-\gamma}$ and $M^{-\gamma}\log(M)^k$ are considered to be essentially the same in our setting. 

\begin{definition}[Approximation Rates]\label{def:app_rate}
Let $\cH$ be a Hilbert space, $\cC\subseteq \cH$, and $\cS\in\fS(\cH)$. We define the \textbf{approximation rate} of $\cS$ for $\cC$ as
\begin{align}\label{eq:app_rate_def}
    \gamma(\cC,\cS):=\sup\left\{\gamma>0\colon
    \left(M\to \sup_{f\in\cC}\infp_{h\in S_M}\|f-h\|_{\cH}\right)\in\cO_{M\to\infty}(M^{-\gamma})\right\}.
\end{align}
\end{definition}

Given a dictionary and a polynomial $\pi$, we now define the induced superpositional approximation scheme by setting the set of budget-$M$ approximants to contain all $M$-term combinations, where the index of the used dictionary elements as well as the size of the coefficients are bounded by $\pi(M)$. 

\begin{definition}[Superpositional Schemes]\label{def:super_schemes}
Let $\cH$ be a Hilbert space, $\cD=(\phi_k)_{k\in\N}\subseteq \cH$, and $\pi$ a polynomial. We define the \textbf{superpositional approximation scheme} with \textbf{depth search constraint} $\pi$ as $\cS(\cD,\pi)=(S_M)_{M\in\N}$ with 
\begin{align}\label{eq:dict_SM}
    S_M:=\left\{\sum_{i\in I}c_i \phi_i \colon I\subseteq [\pi(M)], |I|\leq M, c\in\R^I, \|c\|_\infty\leq\pi(M)\right\}.
\end{align}
\end{definition}

We are now able to define precisely what we mean by a superpositional approximation rate, namely an upper bound\footnote{As we are taking a supremum, there does not necessarily need to exist a dictionary that actually achieves an optimal rate.} on the best rate achieved by any dictionary under any polynomial depth search constraint. 

\begin{definition}[Superpositional approximation rate]\label{def:sup_rate}
Let $\cH$ be a Hilbert space and $\cC\subseteq \cH$.
We define the \textbf{superpositional approximation rate} for $\cC$ as
\begin{align*}
    \sgamma(\cC):=
    \sup_{\substack{\cD=(\phi_i)_{i\in\N}\subseteq\cH, \pi\in\cP_+}} \gamma(\cC,\cS(\cD,\pi)).
\end{align*}
\end{definition}

Note that the polynomial depth search constraint is crucial, as we want to be very general with our notion of dictionaries and therefore simply define them to be arbitrary sequences of elements in the Hilbert space. 
Function classes studied in approximation theory are generally separable, i.e.\@ contain countable dense subsets which would allow arbitrarily good $1$-term approximations and therefore an infinite rate.

\begin{definition}[Optimal rate]
Let $\cH$ be a Hilbert space, let $\cC\subseteq \cH$, and denote the \textbf{covering numbers} of $\cC$ by
\begin{align*}
    N_\eps(\cC):=\inf\{|X|\colon X \subseteq\cH, \cC\subseteq\bigcup_{x\in X}B_\eps(x)\}\in\N\cup\{\infty\},
\end{align*}
where $B_\eps(x):=\{y\in\cH\colon \|x-y\|_{\cH}\leq\eps\}$. We define the \textbf{optimal rate} of $\cC$ as
\begin{align*}
    \gamma^*(\cC):=\sup\left\{\gamma>0\colon
    \left(\eps\to \log N_\eps(\cC)\right)\in\cO_{\eps\to 0}(\eps^{-\frac{1}{\gamma}})\right\}.
\end{align*}
\end{definition}

This provides us with a measure of complexity of a function class. In particular, it can be shown\footnote{See \cite{DeepTheory} for a proof as well as a more detailed discussion of the matter.} that
\begin{align}\label{eq:super_optimal_bound}
    \sgamma(\cC) \leq \gamma^*(\cC).
\end{align}
Essentially, due to the polynomial constraint, any $M$-term approximation can be encoded as a bit string of length $\cO(M\log^q(M))$, where the specific choice of $\pi$ only affects $q$ and the implicit constant, and the quantization error is no larger than the approximation error. This means that the possibility to $\eps$-approximate a set with $M$-term combinations induces a $2\eps$-covering of cardinality $2^{\cO(M\log^q(M))}$. 
Importantly, \eqref{eq:super_optimal_bound} is not just an upper bound, but for many relevant function classes it is attainable.

Note that, if we allowed an exponentially deep search, one could obtain the optimal rate by only using $1$-term approximations from a dictionary which is simply an amalgamation of $\eps_k$-coverings, for some suitable sequence of $\eps_k$ with $\lim_{k\to\infty}\eps_k=0$. Intuitively speaking, the polynomial depth search constraint ensures that the expressivity of a superpositional scheme relies on the combinatorial power of taking different linear combinations of dictionary elements.

Unfortunately, we are currently only able to bound $\sgamma(\cC)$ in a rather pathological case, whereas for our main results we need to put some restrictions on the allowed dictionaries. Thus, we introduce the following.

\begin{definition}[Riesz Dictionaries]\label{def:Riesz_dict}
Let $T\subseteq\N$, $\cD=(\phi_i)_{i\in T}\subseteq\cH$, and $c\in(0,\infty)$.
We call $\cD$ a \textbf{$c$-Riesz dictionary} if, for every $i\in T$, we have $\|\phi_i\|_{\cH}=1$, and for every finitely supported sequence $(a_i)_{i\in T}\subseteq\R$, we have
\begin{align*}
    \|\sum_{i\in T} a_i\phi_i\|_{\cH}^2\geq c \sum_{i\in T} |a_i|^2.
\end{align*}
For $c\in(0,\infty)$, we denote by $\mathrm{Riesz}_c(\cH)$ the set of all $c$-Riesz dictionaries in $\cH$.
\end{definition}

\begin{definition}[Riesz approximation rate]
Let $\cH$ be a Hilbert space and $\cC\subseteq \cH$.
We define the \textbf{Riesz approximation rate} for $\cC$ as
\begin{align*}
    \Rgamma(\cC):=
    \sup_{\substack{c\in(0,\infty),\cD\in \mathrm{Riesz}_c(\cH), \pi\in\cP_+}} \gamma(\cC,\cS(\cD,\pi)).
\end{align*} 
\end{definition}

Next, we introduce some neural network terminology in order to have an example of a compositional approximation scheme. 

\begin{definition}[Neural Networks]
Let $L\in\N$, $N=(N_0,\dots,N_L)\in\N^{L+1}$, and $\rho\colon\R\to\R$. We call an $L$-tuple of matrix-vector pairs 
\begin{align*}
    \Theta=(A_\ell,b_\ell)_{\ell=1}^L\in\prod_{\ell=1}^L(\R^{N_\ell\times N_{\ell-1}}\times\R^{N_\ell})=:\cP_N
\end{align*}
a \textbf{neural network} and define its \textbf{realization} (with respect to $\rho$) as
\begin{align*}
    \cR_\rho(\Theta):= W_L\circ\rho\circ W_{L-1}\circ\dots\circ\rho\circ W_1, 
\end{align*}
where $W_\ell(x):=A_\ell x+b_\ell$ and $\rho$ is applied componentwise. 

We further denote by $\cL(\Theta):=L$ its \textbf{depth}, by $\cM(\Theta):=\sum_{\ell=1}^L(\|A_\ell\|_0+\|b_\ell\|_0)$ its \textbf{connectivity}, and by $\cB(\Theta):=\max_{\ell\\\in[L]}\max\{\|A_\ell\|_\infty,\|b_\ell\|_\infty\}$ its \textbf{weight magnitude}. Lastly, we denote by 
\begin{align*}
    \cN_{d,d'}:=\bigcup_{L\in\N}\bigcup_{\substack{N\in\N^{L+1}\colon\\ N_0=d,N_L=d'}} \cP_N
\end{align*}
the set of all neural networks with $d$-dimensional input and $d'$-dimensional output.
\end{definition}

We can now use this to define a corresponding type of approximation scheme. 
\begin{definition}[Neural Network Schemes]\label{def:NNSchemes}
Let $d,d'\in\N$, $\Omega\subseteq\R^d$, let $\cH\subseteq\{f:\Omega\to\R^{d'}\}$ be a Hilbert space, $\pi\in\cP_+$, and $\rho\colon\R\to\R$. We define the \textbf{neural network approximation scheme} with \textbf{depth constraint} $\pi$ and \textbf{activation} $\rho$ as $\cS_\cN(\rho,\pi)=(S_M)_{M\in\N}$ with 
\begin{align*}
    S_M:=\left\{ \cR_\rho(\Theta) \colon \Theta\in\cN_{d,d'}, \cM(\Theta)\leq M, \cL(\Theta)\leq \pi(\log(M)), \cB(\Theta)\leq\pi(M)\right\}.
\end{align*}
We define, for $\cC\subseteq\cH$, the \textbf{neural network approximation rate} (with activation $\rho$) as
\begin{align*}
    \gamma^{\cN}_\rho(\cC):=
    \sup_{\substack{\pi\in\cP_+}} \gamma(\cC,\cS_\cN(\rho,\pi)).
\end{align*}
\end{definition}

The polylogarithmic constraint on the depth ensures that we can again encode any approximant in $S_M$ as a bit string of length $\cO(M\log^q(M))$, where the specific choice of $\pi$ only affects $q$ and the implicit constant, and the quantization error is no larger than the approximation error. In particular, we have, analogously to the superpositional case, that
\begin{align}\label{eq:NN_optimal_bound}
    \gamma^{\cN}_\rho(\cC) \leq \gamma^*(\cC)
\end{align}
as long as the activation is Lipschitz continuous.

\section{Superpositional Limitations}\label{sec:supLims}

We start by pointing out a fairly simple, but highly pathological, way to obtain a function class with $\sgamma(\cC)=-\infty$ but\footnote{Note that a rate of $-\infty$ means that the set in \eqref{eq:app_rate_def} is empty since the approximation error decays slower than $\!M^{-\gamma}$ for every $\gamma>0$.} $\gamma^{\cN}_\rho(\cC)>0$. Essentially, this boils down to the observation that there are orthonormal systems $(e_k)_{k\in\N}\subseteq\cH$ that can be efficiently approximated by neural networks, i.e.\@ with a connectivity that has logarithmic dependence on the index $k$, and at most polynomial dependence on the approximation error, while obeying the necessary constraints on depth and weight magnitude. In \cite{DeepTheory}, it is shown\footnote{In fact the results in \cite{DeepTheory} are a bit stronger than is needed here, as they consider the case of generator functions that are well-behaved enough for the dependence of the approximation error to also be logarithmic.} that, using ReLU neural networks, this is the case for affine systems under fairly mild assumptions, with spline wavelets as an explicit example. We can now consider the rather degenerate function class $\C:=(\log(k)^{-1}e_k)_{k\in\N}$, and note that $\eps$-approximation of the whole function class is accomplished by providing $\eps$-approximations for all $e_k$ with $k\leq\lceil2^{\frac{1}{\eps}}\rceil$, as the remaining elements are simply $\eps$-approximated by $0$. Due to the logarithmic dependence on the index, this means ReLU neural networks with connectivity having a polynomial dependence on $\tfrac{1}{\eps}$ are sufficient, i.e.\@ we have $\gamma^{\cN}_\rho(\cC)>0$. On the other hand, superpositional approximation requires a dictionary whose size is proportional to the number of elements that need to be approximated in order to achieve a given uniform $\eps$-error, i.e.\@ $\sim\lceil2^{\frac{1}{\eps}}\rceil$, which results in $\sgamma(\cC)=-\infty$. 

A formal result on a slightly extended version of this example can be found in \Cref{sec:lower_end}. The fact that taking linear combinations is not particularly helpful if everything we are trying to approximate is orthogonal to everything else is neither surprising nor, by itself, all that useful, as imposing pairwise orthogonality on all elements of a function class is extremely limiting. If, however, we relax the assumption of orthogonality to almost orthogonality, the situation becomes much more interesting. Specifically, for $\eta\in(0,1)$, two elements $x,y\in\cH$ are called $\eta$-almost orthogonal if
\begin{align*}
    |\<\tfrac{x}{\|x\|_{\cH}},\tfrac{y}{\|y\|_{\cH}}\>|\leq \eta,  
\end{align*}
i.e.\@ we require the (normalized) inner product between the two elements only to be small but not actually $0$. It is a relatively well-known counterintuitive observation\footnote{See Theorem 1 in \cite{KAINEN19937}.} that the $d$-dimensional Euclidean sphere contains large sets of almost orthogonal points. Specifically, it holds for $\eta\in(0,1)$, $d\geq 2$, that
\begin{align*}
    \max\{|X|\colon X\subseteq S^{d-1}, x\neq y\in X \implies |\<x,y\>|\leq\eta\}\geq e^{\frac{1}{2}d\eta^2}.
\end{align*}
This means that for $\eta\sim d^{-\alpha}$ with $\alpha\in(0,\tfrac{1}{2})$, $\eta$ goes to $0$ as $d$ increases, while the potential size of pairwise $\eta$-almost orthogonal sets is exponential in $d^{1-2\alpha}$, which stands in stark contrast to the cardinality of strictly orthogonal sets being at most $d$. Combining this with our geometric asymptotic viewpoint suggests that there may be a fairly rich zoo of function classes that are approximated reasonably well by a sequence of finite-dimensional linear spaces, but that, within these spaces, contain very large\footnote{More specifically, superpolynomially large w.r.t.\@ the finite-dimensional space they are contained in.} sets of almost orthogonal functions. Taking into account the previously discussed difficulty of superpositional approximation in dealing with highly orthogonal sets, it stands to reason that it may also struggle with highly almost orthogonal sets. In the remainder of this section, we will substantiate this intuition and then construct explicit examples in the following section.

To this end, we first establish a technical lemma, showing that, given a large enough set of almost orthogonal elements and any point on the unit sphere $S_1(\cH)$, at least one of them must have a certain distance from that point.
   
\begin{lemma}\label{lem:main_technical}
       Let $\cH$ be a Hilbert space, $\theta > 1$, $T\in\N \cap [4^{\frac{1}{\theta-1}},\infty)$, and $x,z_1,\dots,z_T\in S_1(\cH)$.
       If $|\<z_i,z_j\>|\leq T^{-\theta}$ for all $i,j\in[T]$ with $i\neq j$, then
       \begin{align*}
           \min_{j\in[T]}|\<x,z_j\>| \leq 4 T^{-\frac{1}{2}}.
       \end{align*}
   \end{lemma}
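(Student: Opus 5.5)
The plan is to argue by contradiction through a Bessel-type inequality for almost orthogonal systems, using the Gram matrix of the $z_j$. Suppose that $|\<x,z_j\>|>4T^{-\frac12}$ for every $j\in[T]$. Then
\begin{align*}
    \sum_{j=1}^T|\<x,z_j\>|^2 > 16,
\end{align*}
and we aim to contradict this by showing that the sum is at most a small constant (at most $2$, say) whenever the $z_j$ are $T^{-\theta}$-almost orthogonal and $T\geq 4^{\frac{1}{\theta-1}}$.

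The key step is a Bessel inequality with a Gram-matrix correction. Let $G=(\<z_i,z_j\>)_{i,j\in[T]}$. Set $a_j:=\<x,z_j\>$ and $y:=\sum_j a_j z_j$. Since $\|x\|=1$, Cauchy--Schwarz gives
\begin{align*}
    \sum_j a_j^2=\<x,y\>\leq\|y\|=(a^\top G a)^{1/2}\leq \|G\|_{\mathrm{op}}^{1/2}\|a\|_2 .
\end{align*}
Hence $\|a\|_2^2\leq\|G\|_{\mathrm{op}}$. Next bound the operator norm of $G$. Its diagonal entries are $1$, since the $z_j$ lie on the unit sphere, and its off-diagonal entries are bounded by $T^{-\theta}$. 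By the Gershgorin or Schur test,
\begin{align*}
    \|G\|_{\mathrm{op}}\leq 1+(T-1)T^{-\theta}\leq 1+T^{1-\theta}.
\end{align*}
The assumption $T\geq 4^{\frac{1}{\theta-1}}$ gives exactly $T^{1-\theta}\leq\frac14$. Therefore
\begin{align*}
    \sum_j|\<x,z_j\>|^2\leq\tfrac54.
\end{align*}

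To conclude, the minimum of $|\<x,z_j\>|^2$ over $j$ is at most the average, which is at most $\frac{5}{4T}$. This yields
\begin{align*}
    \min_j|\<x,z_j\>|\leq \sqrt{5/4}\,T^{-\frac12}\leq 4T^{-\frac12},
\end{align*}
with plenty of room in the constant.

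The only mildly delicate point is the Bessel step. The $z_j$ are not orthonormal, so the naive inequality $\sum_j a_j^2\leq\|x\|^2$ is unavailable; it is replaced by the operator-norm bound on $G$, which the almost-orthogonality hypothesis controls. The remaining steps are routine.
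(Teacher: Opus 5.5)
Your argument is correct, and it takes a genuinely different, essentially dual, route from the paper's.

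The paper proceeds in three steps:
\begin{itemize}
\item it projects $x$ onto $\<z_1,\dots,z_T\>$ and writes the projection as $y=\sum_j\lambda_j z_j$;
\item it controls these synthesis coefficients through a \emph{lower} bound on the Gram form, $1\geq\|y\|^2\geq(1-2T^{1-\theta})\|\lambda\|_2^2$;
\item it orders the coefficients, so that for the index with the smallest $|\lambda_j|$ one gets $|\<x,z_j\>|\leq|\lambda_j|+T^{-\theta}\|\lambda\|_1\leq T^{-\frac12}(1+T^{1-\theta})\|\lambda\|_2$.
\end{itemize}

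You instead work with the analysis coefficients $\<x,z_j\>$ directly. You need only the \emph{upper} bound $\|G\|_{\mathrm{op}}\leq 1+(T-1)T^{-\theta}$, i.e.\ a Bessel inequality for an almost orthogonal system, followed by averaging.

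This gains you several things:
\begin{itemize}
\item no projection or reordering is needed;
\item you get the stronger conclusion $\sum_j|\<x,z_j\>|^2\leq 1+T^{1-\theta}$ rather than a bound on a single inner product;
\item the constant improves from $4$ to $\sqrt{5/4}$;
\item the hypothesis $T\geq 4^{\frac{1}{\theta-1}}$ becomes unnecessary. Once $\theta>1$, $(T-1)T^{-\theta}<1$ for every $T\in\N$, so $\min_j|\<x,z_j\>|<\sqrt{2}\,T^{-\frac12}$ unconditionally.
\end{itemize}

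The paper's route, by contrast, relies on the lower Riesz-type bound for the $z_j$ to keep $\|\lambda\|_2$ under control. That is exactly where its assumption on $T$ is used.

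Your opening contradiction set-up is redundant, since the direct averaging step already gives the claim.
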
 
\begin{proof}
    See \Cref{sec:proofs}.
\end{proof}

As a direct corollary we obtain a bound on the number of almost orthogonal points which can be contained in a spherical cap. 

\begin{corollary}\label{cor:ReachBound}
       Let $\cH$ be a Hilbert space, $\theta > 1$, $n\in\N$, $T\in\N \cap [4^{\frac{1}{\theta-1}},\infty)$, and $x,z_1,\dots,z_n\in S_1(\cH)$.
       If $|\<z_i,z_j\>|\leq T^{-\theta}$, for all $i,j\in[n]$ with $i\neq j$, then
       \begin{align*}
           |\{j\in[n]\colon |\<z_j,x\>|>4T^{-\frac{1}{2}}\}| < T.
       \end{align*}
\end{corollary}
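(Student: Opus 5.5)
The argument is by contradiction, reducing the statement to \Cref{lem:main_technical}. Suppose the set $J:=\{j\in[n]\colon |\<z_j,x\>|>4T^{-\frac{1}{2}}\}$ has cardinality at least $T$.

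First, choose any $T$ distinct indices $j_1,\dots,j_T\in J$; this is possible because $T\in\N$ and $|J|\geq T$. Relabel the corresponding vectors as $z'_k:=z_{j_k}$ for $k\in[T]$.

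Next, check that the hypotheses of \Cref{lem:main_technical} hold for $x,z'_1,\dots,z'_T$ with the same $\theta$ and $T$. All of these vectors lie in $S_1(\cH)$. The constraint $T\in\N\cap[4^{\frac{1}{\theta-1}},\infty)$ is assumed verbatim. Finally, $|\<z'_k,z'_l\>|\leq T^{-\theta}$ for $k\neq l$: the indices $j_k$ and $j_l$ are distinct, so this is inherited from the assumed pairwise bound on $z_1,\dots,z_n$.

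The lemma then gives $\min_{k\in[T]}|\<x,z'_k\>|\leq 4T^{-\frac{1}{2}}$. So some $z_{j_k}$ with $j_k\in J$ satisfies $|\<z_{j_k},x\>|\leq 4T^{-\frac{1}{2}}$, which contradicts the definition of $J$ (inner products are real, so the order of arguments is irrelevant). Hence $|J|<T$.

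There is no real obstacle. The only point needing care is that the almost-orthogonality threshold $T^{-\theta}$ is tied to the subset size $T$ rather than to $n$. This is exactly why we extract a subset of size exactly $T$, rather than applying the lemma to all of $J$: the lemma's parameter must match the threshold exponent.
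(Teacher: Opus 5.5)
Your proof is correct and is exactly the argument the paper intends: the paper gives no separate proof and calls the statement a direct corollary of \Cref{lem:main_technical}, and the implied reduction is to pick $T$ indices from the offending set and apply the lemma with the same $T$ and $\theta$. Your observation that you must pass to a subset of size exactly $T$, because the threshold $T^{-\theta}$ is tied to $T$, is the only point requiring care, and you handle it correctly.
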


We are now ready to introduce a central object of our theory: a prototypical type of set for which we can obtain an upper bound on the best rate of approximation achievable by any dictionary satisfying a lower Riesz bound (as defined in \Cref{def:Riesz_dict}). This will then allow us to bound the Riesz approximation rate for a given function class if it contains such a prototypical set. 

\begin{definition}\label{def:SPOON}
Let $\cH$ be a Hilbert space and $\mu,\nu\in(0,\infty)$. We call 
$V=(V_m)_{m\in\N}$ with $V_m\subseteq\cH$ a $(\mu,\nu)$-SPOON (\textbf{S}uper\textbf{po}lynomially growing almost \textbf{o}rthogo\textbf{n}al sequence) if there exist $c,\alpha\in(0,\infty)$ and $m_0\in\N$ such that, for every $m\geq m_0$, it holds that
\begin{enumerate}[(i)]
    \item $|V_m|\geq 2^{m^\alpha}$,
    \item $\displaystyle \|v\| = m^{-\mu}\ \forall v\in V_m$, 
    \item $\displaystyle\max_{v,v'\in V_m, v\neq v'}|\<\tfrac{v}{\|v\|},\tfrac{v'}{\|v'\|}\>|\leq cm^{-\nu}$.
\end{enumerate}
With a slight abuse of notation, we will also write $V=\bigcup_{m\in\N}V_m$ for the union over the sequence of sets, as the usage should be clear from context.
\end{definition}

These prototypical sets are structured as a union over a sequence of sets, which can be thought of as levels of increasingly fine details that need to be dealt with as soon as the desired approximation error $\eps$ is small enough such that $m^{-\mu}>\eps$, i.e.\@ $V_m$ can no longer be trivially $\eps$-approximated by $0$. The $V_m$ themselves are simply sets whose cardinality grows superpolynomially in $m$ and whose elements are pairwise $\eta$-almost orthogonal with $\eta$ decaying like an order-$\nu$ polynomial in $m$.
We will now establish the central theorem of this paper and show that such a structure is sufficient to limit the best achievable Riesz approximation rate. 

\begin{theorem}
\label{thm:main}
Let $\cH$ be a Hilbert space,  $\cC\subseteq\cH$, $\mu,\nu\in(0,\infty)$, and $V\subseteq\cH$ a $(\mu,\nu)$-SPOON.
If $V\subseteq\cC$, it holds that
\begin{align}
    \Rgamma(\cC)\leq \tfrac{1}{2}+\tfrac{\mu}{\nu}.
\end{align}
\end{theorem}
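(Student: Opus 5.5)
We argue by contradiction. Fix $c>0$, a $c$-Riesz dictionary $\cD=(\phi_i)$, a polynomial $\pi\in\cP_+$, and suppose $\gamma:=\gamma(\cC,\cS(\cD,\pi))>\tfrac12+\tfrac{\mu}{\nu}$. Since $V\subseteq\cC$, for some $\gamma'$ with $\tfrac12+\tfrac\mu\nu<\gamma'<\gamma$ there is a constant $C$ such that every $v\in V_m$ has an $M$-term approximant $h_v\in S_M$ with $\|v-h_v\|\leq CM^{-\gamma'}$, for every $M$. The idea is to pick, for each large $m$, a budget $M=M(m)$ with $CM^{-\gamma'}\ll m^{-\mu}$. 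Then each normalized $v/\|v\|$ must correlate substantially with one of at most $M$ dictionary elements among the first $\pi(M)$. Pigeonholing over the $2^{m^\alpha}$ elements of $V_m$ should then produce too many almost orthogonal vectors in a spherical cap, contradicting \Cref{cor:ReachBound}.

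First we extract a dictionary element with large correlation. Write $h_v=\sum_{i\in I_v}a_i\phi_i$ with $|I_v|\leq M$ and $\|v-h_v\|\leq\delta:=CM^{-\gamma'}$. Then $\|h_v\|\leq m^{-\mu}+\delta$, so the Riesz bound gives $\sum|a_i|^2\leq c^{-1}(m^{-\mu}+\delta)^2$. Next,
\begin{align*}
m^{-2\mu}-\delta m^{-\mu}\leq \<v,h_v\>=\sum_{i\in I_v}a_i\<v,\phi_i\>\leq \|a\|_2\Big(\sum_{i\in I_v}|\<v,\phi_i\>|^2\Big)^{1/2}.
\end{align*}
Hence some $i\in I_v$ satisfies $|\<v/\|v\|,\phi_i\>|\gtrsim \sqrt{c}\,M^{-1/2}$, provided $\delta\leq \tfrac12 m^{-\mu}$. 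The Riesz lower bound is used precisely here: without it, large cancelling coefficients would break this step.

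Second, we count. Each $v\in V_m$ is assigned an index $i(v)\in[\pi(M)]$, so some index $i^*$ receives at least $|V_m|/\pi(M)\geq 2^{m^\alpha}/\pi(M)$ elements. The corresponding normalized vectors $z_j$ satisfy $|\<z_j,\phi_{i^*}\>|>\kappa M^{-1/2}$ and are pairwise $cm^{-\nu}$-almost orthogonal. We apply \Cref{cor:ReachBound} with $x=\phi_{i^*}$ and $T$ chosen so that $4T^{-1/2}\leq\kappa M^{-1/2}$, i.e.\ $T\asymp M$. This requires $T^{-\theta}\geq c m^{-\nu}$ for some $\theta>1$, i.e.\ $M^{\theta}\lesssim m^{\nu}$. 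The corollary then bounds the number of such vectors by $T\asymp M$. This contradicts $2^{m^\alpha}/\pi(M)$ once $M$ is polynomial in $m$, since the cardinality of $V_m$ is superpolynomial.

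Third, we check that the exponents fit; this is the main obstacle. We need simultaneously $M^{-\gamma'}\lesssim m^{-\mu}$, i.e.\ $M\gtrsim m^{\mu/\gamma'}$, and $M\lesssim m^{\nu/\theta}$ with $\theta>1$. Taking $M=m^{\beta}$, this requires $\mu/\gamma'<\beta<\nu$, which holds whenever $\gamma'>\mu/\nu$. That condition is weaker than the assumed $\gamma'>\tfrac12+\mu/\nu$, so the loss of $\sqrt{M}$ in the correlation step needs careful tracking.

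Indeed, the correlation bound above really gives $|\<v/\|v\|,\phi_i\>|\gtrsim M^{-1/2}$ only after normalizing. With $T$ tied to $M$, the orthogonality requirement $cm^{-\nu}\leq T^{-\theta}$ forces $T\lesssim m^{\nu/\theta}$. Matching $4T^{-1/2}$ to the correlation threshold is where an extra factor may enter, for instance if one instead bounds the correlation via $\max_i|a_i|$ or uses an $\ell^1$ estimate, giving threshold $M^{-1}$ and hence $T\asymp M^2$. This yields the condition $M^{2}\lesssim m^{\nu}$ together with $M\gtrsim m^{\mu/\gamma'}$, i.e.\ $\gamma'>2\mu/\nu$. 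I would carry out the bookkeeping with the Cauchy–Schwarz version first; if it goes through it gives at least the stated bound $\tfrac12+\tfrac\mu\nu$. Finally, logarithmic slack and the condition $T\geq 4^{1/(\theta-1)}$ are handled by choosing $\theta$ slightly above $1$ and $m$ large.
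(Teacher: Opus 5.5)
Your argument is correct, and it takes a genuinely different and considerably shorter route than the paper. The paper applies \Cref{lem:MbGoodEnough_Riesz} to the best approximants $A_M(v)\in\<\cD_M\>_M$, not to $v$. It must therefore first show that the normalized approximants inherit almost orthogonality from $V_m$. That is what forces the pruning via the sets $P_v,Q_v$, the subset $W_m$, and the injectivity of $A_M$. It also requires the estimate $\<R_M(v),R_M(w)\>/(\|A_M(v)\|\|A_M(w)\|)\lesssim M^{-2r}m^{2\mu}$, which has to beat $M^{-1}$. This is exactly where $r-\sigma\mu>\tfrac12$, and hence the $\tfrac12$ in the bound, comes from.

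You instead transfer the correlation to $v$ itself. Expanding $\<v,h_v\>=\sum_{i\in I_v}a_i\<v,\phi_i\>$ and bounding $\|a\|_2\le c^{-1/2}\|h_v\|$ gives
\[
\max_{i\in I_v}\Big|\Big\langle\tfrac{v}{\|v\|},\phi_i\Big\rangle\Big|\geq \sqrt{\tfrac{c}{M}}\,\frac{\|v\|-\delta}{\|v\|+\delta}\geq\frac{\sqrt{c}}{3\sqrt{M}}
\]
whenever $\delta\le\tfrac12\|v\|$. The almost orthogonality required by \Cref{cor:ReachBound} is then just hypothesis (iii). The approximation error only needs to be small compared with $m^{-\mu}$, rather than with $m^{-\mu}M^{-1/2}$. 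The paper's route buys nothing here beyond applying its lemma verbatim to elements of $\<\cD_M\>_M$.

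Consequently, the hedging in your last paragraph is unnecessary: no further factor enters. Make the following choices:
\[
T=\lceil 150M/c\rceil,\quad \gamma'\in(\tfrac{\mu}{\nu},\gamma),\quad \theta\in(1,\tfrac{\gamma'\nu}{\mu}),\quad \beta\in(\tfrac{\mu}{\gamma'},\tfrac{\nu}{\theta}),\quad M=\lceil m^\beta\rceil .
\]
The choice of $T$ ensures $4T^{-1/2}<\sqrt{c}/(3\sqrt M)$. For large $m$ all the needed conditions then hold: $\delta\le\tfrac12 m^{-\mu}$, $T^{-\theta}\ge c_S m^{-\nu}$ (with $c_S$ the SPOON constant), and $T\ge 4^{1/(\theta-1)}$. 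You obtain $2^{m^\alpha}<T\,\pi(M)$, which is polynomial in $m$, a contradiction.

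So your proof actually gives $\Rgamma(\cC)\le\tfrac{\mu}{\nu}$, which is stronger than the stated $\tfrac12+\tfrac{\mu}{\nu}$. Note also that the $\ell^1$ route $\|a\|_1\le\sqrt{M}\|a\|_2$ yields the same threshold $M^{-1/2}$ as Cauchy--Schwarz. Only a genuinely cruder bound would give $M^{-1}$, and the resulting requirement $\gamma'>2\mu/\nu$ would not imply the theorem when $\mu/\nu>\tfrac12$. So keep the Cauchy--Schwarz (or equivalent $\ell^1$--$\ell^2$) step.
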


\begin{proof}

    Assume, towards a proof by contradiction, that $\Rgamma(\cC)>\tfrac{1}{2}+\tfrac{\mu}{\nu}$. This means that there are $r\in (\tfrac{1}{2}+\tfrac{\mu}{\nu}, \Rgamma(\cC))$, $c>0$, $\cD=(\phi_i)_{i\in\N}\in\mathrm{Riesz}_c(\cH)$, $\tau\in\N$, $C\in\R_+$, and $M_0\in\N$ such that, for all $M\geq M_0$,
    \begin{align*}
        \eps_M(\cC)=\sup_{f\in\cC}\infp_{h\in \<\cD_M\>_M}\|f-h\|_{\cH}\leq C M^{-r},
    \end{align*}
    where $\cD_M=(\phi_i)_{i\in[M^\tau]}$ and
    \begin{align*}
        \<\D_M\>_M=\bigcup_{I\subseteq [M^\tau],|I|\leq M} \<(\phi_i)_{i\in I}\>.
    \end{align*}
We denote by $V=(V_m)_{m\in\N}\subseteq\cC$ the $(\mu,\nu)$-SPOON contained in $\cC$, and observe that, for every $m\in\N$, $M\geq M_0$, we have
\begin{align*}
    \eps_M(V_m)\leq C M^{-r}.
\end{align*}
As this needs to hold for every $M\geq M_0$, we can, in the following, always assume $M$ is as large as is necessary for a given estimate to hold. 
Moreover, we can assume\footnote{Otherwise we could just use finite subsets of the $V_m$ that are large enough to still satisfy condition (i) in \Cref{def:SPOON}, as conditions (ii) and (iii) in \Cref{def:SPOON} remain valid when taking subsets.} w.l.o.g.\@ that $|V_m|<\infty$ for all $m\in\N$.
Let $A_{M}\colon \cH\to \<\cD_M\>_M$ satisfy for all $v\in\cH$ that
\begin{align*}
    A_M(v) \in \argmin_{x\in \<\cD_M\>_M}\|x-v\|,
\end{align*}
and let $R_M\colon \cH\to \cH, v\mapsto v-A_M(v)$. Here the argmin is non-empty because $\<\cD_M\>_M \cap B_1(\cH)$ is compact. The compactness follows immediately since $\cD_M$ is a finite set of vectors. 
It holds, for $v,w\in V_m$, that 
\begin{align*}
    |\<A_M(v),A_M(w)\>|\leq |\<v,w\>| + |\<R_M(v),R_M(w)\>| + |\<R_M(v),w\>| + |\<R_M(w),v\>|.
\end{align*}
While, in general, the last two terms can be of order $M^{-r}m^{-\mu}$, we can, however, argue that for a suitable choice of $m$ and a sufficiently large subset of $V_m$ they obey a bound of order $M^{-r-\frac{1}{2}}m^{-\mu}$. To this end, we consider $\sigma\in(\tfrac{1}{\nu},\tfrac{1}{\mu}(r-\tfrac{1}{2}))$, $m\in(M^\sigma,2M^\sigma)$,
 and define, for $v\in V_m$, the set
\begin{align*}
    P_v:=\{w\in V_m\backslash\{v\}\colon |\<R_M(v),w\>|>4M^{-\frac{1}{2}}\|R_M(v)\|\|w\|\}.
\end{align*}
Applying \Cref{cor:ReachBound} with $T=M$ and $\theta=\tfrac{1}{2}(\sigma\nu+1)$ yields $|P_v|\leq M$, since $c_Sm^{-\nu}\geq T^{-\theta}$ if $M\geq c_S^{\frac{2}{\sigma\nu-1}}$. Next we consider, for $v\in V_m$, the set
\begin{align*}
    Q_v:=\{w\in V_m\backslash\{v\}\colon |\<R_M(w),v\>|>4M^{-\frac{1}{2}}\|R_M(w)\|\|v\|\}.
\end{align*}
While these sets, in general, could be as large as $|V_m|-1$, we note that 
\begin{align*}
    \sum_{v\in V_m}|Q_v|=\sum_{v\in V_m}|P_v|<M|V_m|.
\end{align*}
Consequently, there exists a set $W_m\subseteq V_m$ with $|W_m|\geq (1-\tfrac{1}{M})|V_m|$ such that for every $v\in W_m$, we have $|Q_v|\leq M^2$. Therefore, given $n\in\N$, every subset $X\subseteq W_m$ with $|X|\geq n(M^2 + 2M)$ contains a subset\footnote{This subset can simply be obtained by a greedy-type algorithm, i.e.\@ picking some $v\in X$ and removing $P_v\cup Q_v$. As $v\in Q_{w} \iff w\in P_v$, this can be repeated $n$ times to obtain a set $Y$ with the claimed properties.} $Y$ with $|Y|=n$ such that, for all $v,w\in Y$, it holds that $v\notin P_{w}\cup Q_{w}$ and $w\notin P_{v}\cup Q_{v}$. Next, we note that for $v,w\in V_m$ with $v\notin P_{w}\cup Q_{w}$ and $w\notin P_{v}\cup Q_{v}$, we have
\small
\begin{align}\label{eq:vw_bound}\begin{split}
    |\<\tfrac{A_M(v)}{\|A_M(v)\|},\tfrac{A_M(w)}{\|A_M(w)\|}\>|
    &\leq 4^{\mu+1}M^{2\sigma\mu}\big(|\<v,w\>| + |\<R_M(v),R_M(w)\>| + |\<R_M(v),w\>| + |\<R_M(w),v\>|\big)\\
    &\leq 4^{\mu+1}M^{2\sigma\mu}\big(c_S M^{-\sigma(\nu+2\mu)} + C^2M^{-2r} + 8CM^{-r-\frac{1}{2}-\sigma\mu} \big)\\
    &\leq 4^{\mu+1}(c_S+C^2+8C)M^{-\min\{\sigma\nu,2(r-\sigma\mu), r+\tfrac{1}{2}-\sigma\mu\}},
\end{split}\end{align}
\normalsize
where we used that $\|A_M(v)\|\geq 2^{-(\mu+1)}M^{-\sigma\mu}$ due to $r-\sigma\mu \geq\tfrac{1}{2}$ and $M\geq 4^{\mu+1}C^2$.
We proceed by noting that, due to $\cD_M$ being $c$-Riesz, \Cref{lem:MbGoodEnough_Riesz} ensures that, for every $x\in\<\cD_M\>_M$, there exists an $i\in[M^\tau]$ with
\begin{align}\label{eq:wphi_close}
    |\<\tfrac{x}{\|x\|},\phi_i\>|\geq c^{\frac{1}{2}}M^{-\frac{1}{2}}.
\end{align}
In particular, we have
\begin{align*}
    A_M(W_m) = \bigcup_{i\in[M^\tau]} S_i,
\end{align*}
where
\begin{align*}
    S_i:=\{x\in A_M( W_m)\colon |\<\tfrac{x}{\|x\|},\phi_i\>|\geq c^{\frac{1}{2}}M^{-\frac{1}{2}}\}.
\end{align*}
We observe that, for $M$ sufficiently large, $A_M$ is injective on $V_m$, and thus $|W_m|=|A_M(W_m)|$, since, for every $w,v\in V_m$ with $v\neq w$, we have
\begin{align*}
    \|A_M(v)-A_M(w)\| \geq \|v\|+\|w\| -2|\<w,v\>|- \|v-A_M(v)\|-\|w-A_M(w)\|>0 
\end{align*}
due to $2m^{-\mu}(1-c_sm^{-\nu})<2CM^{-r}$.
Consequently there exists $i\in[M^\tau]$ such that 
\begin{align*}
    |S_i|\geq\tfrac{|W_m|}{M^\tau}\geq \tfrac{(1-\tfrac{1}{M})|V_m|}{M^\tau}\geq (1-\tfrac{1}{M}) M^{-\tau}2^{M^{\sigma\alpha}}.
\end{align*}
Due to \eqref{eq:vw_bound} there exists a subset $Y\subseteq S_i$ with $|Y|\geq \tfrac{|S_i|}{M^2+2M}$ such that, for every $x,y\in Y$ with $x\neq y$, we have
\begin{align*}
    |\<\tfrac{x}{\|x\|},\tfrac{y}{\|y\|}\>|&\leq 4^{\mu+1}(c_S+C^2+8C)M^{-\min\{\sigma\nu,2(r-\sigma\mu), r+\tfrac{1}{2}-\sigma\mu\}}.
\end{align*}
Applying \Cref{cor:ReachBound} with $T=\lceil \tfrac{16}{c}\rceil M$ and $\theta=\tfrac{1}{2}(1+\eta)>1$, where $\eta=\min\{\sigma\nu,2(r-\sigma\mu), r+\tfrac{1}{2}-\sigma\mu\}$, we obtain, for $M\geq ((\tfrac{32}{c})^\theta4^{\mu+1}(c_S+C^2+8C))^{\frac{2}{\eta-1}}$, that  
\begin{align*}
    |Y|\leq \tfrac{32}{c}M.
\end{align*}
Combining the above yields
\begin{align*}
     2^{M^{\sigma\alpha}}\in \cO(M^{\tau+3})
\end{align*}
which is a contradiction, and thus completes the proof.
\end{proof}

\section{Explicit construction of SPOONs}\label{sec:constrOfSpoons}
Having established an abstract condition that limits achievable Riesz approximation rates, we now need to make sure that there are in fact function classes satisfying that condition and that we can obtain better compositional rates for them. 
To this end, we introduce the following construction.
\begin{definition}\label{def:ToySpoon}
    Let 
    \begin{align*}
        f\colon \R&\to [0,1],
                x\mapsto \begin{cases}2x, &x \in[0,\tfrac{1}{2})\\ 2(1-x), &x\in[\tfrac{1}{2},1]\\
                0, &x\in\R\backslash[0,1]\end{cases}. 
    \end{align*}
    For $m\in\N$ and $a\in\{0,1\}^m$, let $f_a\in L^2([0,1])$ be given by
    \begin{align*}
        f_a(x)=\sum_{j=1}^m a_j f(m(x-\tfrac{j-1}{m})).
    \end{align*}
    For $m,k\in\N$ let
    \begin{align*}
        A_{m,k}&:=\{\ba=(a^1,\dots,a^k)\colon a^1,\dots,a^k\in\{0,1\}^m\},\\
        H_{m,k}&:=\{F_{\ba}=f_{a^k}\circ\dots\circ f_{a^1}\colon \ba\in A_{m,k}\}.
    \end{align*}
\end{definition}

Note that taking the composition $g\circ f$ results in a copy of $g$ on the interval $[0,\tfrac{1}{2}]$ and a mirrored copy of $g$ on the interval $[\tfrac{1}{2},1]$, both dilated by a factor of $2$. Consequently, taking the composition $f_{a^1}\circ f_{a^2}$ with $a^1,a^2\in\{0,1\}^m$ results in a function which, on each subinterval $[\tfrac{j-1}{m},\tfrac{j}{m}]$ for which $a^2_j=1$, exhibits a copy and a mirrored copy of $f_{a^1}$. As such, $H_{m,k}$ is naturally contained in the $2^{k-1}m^k$-dimensional span of $h_1,\dots,h_{2^{k-1}m^k}$, where $h_j$ behaves like a suitably dilated hat function on the interval $[\tfrac{j-1}{2^{k-1}m^k}, \tfrac{j}{2^{k-1}m^k}]$ and is $0$ otherwise. However, $H_{m,k}$ contains only very specific functions exhibiting self-similarity induced by the iterated copying and mirroring effect.
The norms and inner products of elements in $H_{m,k}$ can be easily computed from their defining binary vectors as follows.

\begin{lemma}\label{lem:F_nip}
Let $k,m\in\N$ and $\ba,\bb\in A_{m,k}$ with $\forall j\in[k]\colon a^j,b^j\neq0$. Then
    \begin{align*}
        \|F_{\ba}\|_{L^2([0,1])} = \left(\tfrac{1}{3m^k}\prod_{j=1}^k\|a^j\|_0\right)^{\frac{1}{2}}
    \end{align*}
    and
    \begin{align*}
        \frac{\<F_{\ba}, F_{\bb}\>}{\|F_{\ba}\|_{L^2([0,1])}\|F_{\bb}\|_{L^2([0,1])}}
        =\frac{\prod_{j=1}^k\|a^j\wedge b^j\|_0}{\sqrt{\prod_{j=1}^k\|a^j\|_0\|b^j\|_0}}.
    \end{align*}
\end{lemma}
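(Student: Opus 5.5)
The plan is to prove a single ``product formula'' for $L^2$ inner products of compositions and read off both claims from it. Precisely, for $\ba,\bb\in A_{m,k}$ we will show by induction on $k$ that
\begin{align*}
    \<F_{\ba},F_{\bb}\> = \frac{1}{3m^k}\prod_{j=1}^k\|a^j\wedge b^j\|_0 .
\end{align*}
Setting $\bb=\ba$ gives $\|a^j\wedge a^j\|_0=\|a^j\|_0$, hence the norm formula. Dividing the general formula by the two norms gives the normalized formula, and the hypothesis $a^j,b^j\neq 0$ guarantees these norms are nonzero.

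The key structural fact concerns the map $f_a$ with $a\in\{0,1\}^m$. On each subinterval $I_j=[\tfrac{j-1}{m},\tfrac{j}{m}]$ with $a_j=1$, it is a tent map: increasing linearly onto $[0,1]$ on the left half of $I_j$ and decreasing linearly onto $[0,1]$ on the right half. On $I_j$ with $a_j=0$ it vanishes identically. Let $G=f_{a^k}\circ\dots\circ f_{a^2}$ and $\tilde G=f_{b^k}\circ\dots\circ f_{b^2}$, so that $F_{\ba}=G\circ f_{a^1}$ and $F_{\bb}=\tilde G\circ f_{b^1}$.

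We first check that $G(0)=0$ for every such composition. This holds because $f_a(0)=0$ for all $a$, since $f(0)=0$ and $f(m(0-\tfrac{j-1}{m}))=0$ for $j\ge 2$. Consequently, on any $I_j$ with $a^1_j=0$, we have $F_{\ba}=G(0)=0$ there, and similarly for $\bb$. So only indices $j$ with $a^1_j=b^1_j=1$ contribute to the inner product.

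On such an $I_j$, both inner maps coincide with the same tent piece $t_j$. Change variables $y=t_j(x)$ on each of the two monotone halves of $I_j$; each half has length $\tfrac{1}{2m}$ and $|t_j'|=2m$. This gives
\begin{align*}
    \int_{I_j}G(t_j(x))\tilde G(t_j(x))\,\d x = 2\cdot\frac{1}{2m}\int_0^1 G\tilde G\,\d y = \frac{1}{m}\<G,\tilde G\>_{L^2([0,1])}.
\end{align*}
Summing over $j$ yields $\<F_{\ba},F_{\bb}\>=\tfrac{1}{m}\|a^1\wedge b^1\|_0\,\<G,\tilde G\>$, which is the induction step.

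For the base case $k=1$, the same computation with $G=\tilde G=\mathrm{id}$ on $[0,1]$ reduces to $\int_0^1 y^2\,\d y=\tfrac13$. This gives $\<f_a,f_b\>=\tfrac{1}{3m}\|a\wedge b\|_0$. Equivalently, we may start the induction at $k=0$ with the identity map, whose inner product with itself is $\tfrac13$.

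The main point needing care is the reduction from the full composition to the ``outer'' part. The induction peels off the innermost map $f_{a^1}$, and this matches the definition $F_{\ba}=f_{a^k}\circ\dots\circ f_{a^1}$. We must also confirm that $G$ is only evaluated on $[0,1]$, which holds since $f_a$ maps $[0,1]$ into $[0,1]$, and that the substitution is valid for merely piecewise-linear continuous $G$, which is fine. Everything else is bookkeeping.
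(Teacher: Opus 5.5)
Your proof is correct, but it takes a different route from the paper's.

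The paper first asserts a global closed form, $F_{\ba}=f_{p(\ba)}$ with $p(\ba)=a^1\otimes(a^2,r(a^2))\otimes\dots\otimes(a^k,r(a^k))\in\{0,1\}^{2^{k-1}m^k}$. That is, it writes $F_{\ba}$ as a sum of disjointly supported hat functions of width $2^{-(k-1)}m^{-k}$. It then gets both formulas by counting nonzero entries of $p(\ba)$ and of $p(\ba)\wedge p(\bb)$, with each hat contributing $\tfrac{1}{3\cdot 2^{k-1}m^k}$.

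You never identify $F_{\ba}$ explicitly. Instead you peel off the innermost factor $f_{a^1}$, using only three facts: its active branches are linear bijections onto $[0,1]$ with slope $\pm 2m$, it vanishes on inactive cells, and $G(0)=0$. A change of variables then yields the recursion $\<F_{\ba},F_{\bb}\>=\tfrac{1}{m}\|a^1\wedge b^1\|_0\<G,\tilde G\>$.

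The paper's route gives an explicit description of every element of $H_{m,k}$ as a fine-scale hat-function expansion. This is what underlies its remark that $H_{m,k}$ lies in a $2^{k-1}m^k$-dimensional span. However, its key identity is only sketched from $f\circ f=f_{(1,1)}$. A rigorous version needs the mirroring bookkeeping, e.g.\ $r(x\otimes y)=r(x)\otimes r(y)$ and the fact that the blocks $(a^j,r(a^j))$ are palindromes.

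Your induction sidesteps all of that and is easier to make airtight. It depends only on the branch structure of a single factor, not on a global description of the composition.
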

\begin{proof}
    We start by noting that composing $f$ with itself produces two dilated copies of $f$ on the intervals $[0,\tfrac{1}{2}]$ and $[\tfrac{1}{2},1]$, or more specifically in the notation above $f\circ f=f_a$ with $a=(1,1)$. Based on this we obtain
    \begin{align*}
        F_{\ba}=f_{p(\ba)},
    \end{align*}
    with
    \begin{align*}
        p(\ba) = a^1\otimes(a^2,r(a^2))\otimes\dots\otimes(a^k,r(a^k)) \in\{0,1\}^{2^{k-1}m^k},
    \end{align*}
    where $\otimes$ denotes the Kronecker product and $r(x_1,x_2,\dots,x_m):=(x_m,x_{m-1},\dots,x_1)$ reverses the order of the components of a vector. Therefore we have
    \begin{align*}
        \|F_{\ba}\|^2_{L^2([0,1])} = \|p(\ba)\|_0 \int_0^{2^{-k+1}m^{-k}}(2^{k-1}m^kx)^2\d x=\frac{\prod_{j=1}^k\|a^j\|_0}{3m^k}
    \end{align*}
    and, similarly,
    \begin{align*}
        \<F_{\ba}, F_{\bb}\>
        =\frac{\prod_{j=1}^k\|a^j\wedge b^j\|_0}{3m^k}.
    \end{align*}
\end{proof}

In order to proceed, we will need to establish the following combinatorial estimate. It is more naturally formulated for subsets of $[m]$ instead of binary vectors in $\{0,1\}^m$, which can then easily be identified with each other. 

\begin{lemma}\label{lem:Qset}
    Let $\alpha\in(0,\tfrac{1}{2})$, $\beta\in(0,\alpha)$. Then there exists $m_0\in\N$ such that for $m\geq m_0$ there is a set $Q_{m,\alpha,\beta}\subseteq\cP_{m^\alpha}([m]):=\{I\subseteq [m]\colon |I|=\lfloor m^\alpha\rfloor\}$ such that
    \begin{align*}
        \max_{q,q'\in Q_{m,\alpha,\beta}\colon q\neq q'}|q\cap q'|\leq m^\beta
    \end{align*}
    and
    \begin{align*}
        |Q_{m,\alpha,\beta}|\geq  m^{(\frac{1}{2}-\alpha) m^\beta}. 
    \end{align*}
\end{lemma}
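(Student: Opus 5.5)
The plan is a greedy (Gilbert--Varshamov type) packing argument in $\cP_{m^\alpha}([m])$. Set $s:=\lfloor m^\alpha\rfloor$ and $t:=\lfloor m^\beta\rfloor$, and call $q,q'\in\cP_{m^\alpha}([m])$ \emph{conflicting} if $|q\cap q'|\geq t+1$. Since $|q\cap q'|$ is an integer, the condition $|q\cap q'|\leq t$ is the same as $|q\cap q'|\leq m^\beta$. We build $Q_{m,\alpha,\beta}$ greedily. Pick any $q$, discard every set conflicting with it, and repeat until nothing is left. Any two chosen sets are then non-conflicting, which is the intersection condition. Moreover
\begin{align*}
    |Q_{m,\alpha,\beta}|\geq \frac{|\cP_{m^\alpha}([m])|}{\max_q |\{q'\colon |q\cap q'|\geq t+1\}|}.
\end{align*}

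Next we bound the denominator. A set $q'$ conflicting with a fixed $q$ is determined, non-uniquely, by a $(t+1)$-subset of $q$ together with a choice of the remaining $s-t-1$ elements from $[m]$. Hence the number of such $q'$ is at most $\binom{s}{t+1}\binom{m}{s-t-1}$, and
\begin{align*}
    |Q_{m,\alpha,\beta}|\geq \frac{\binom{m}{s}}{\binom{s}{t+1}\binom{m}{s-t-1}}.
\end{align*}
This requires $t+1\leq s$, which holds for large $m$ since $\beta<\alpha$.

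The remaining work is an elementary estimate. Write $j:=s-t-1$. Telescoping gives
\begin{align*}
    \frac{\binom{m}{s}}{\binom{m}{j}}=\prod_{i=j+1}^{s}\frac{m-i+1}{i}\geq\left(\frac{m-s}{s}\right)^{t+1}.
\end{align*}
Combined with $\binom{s}{t+1}\leq s^{t+1}$, this yields
\begin{align*}
    |Q_{m,\alpha,\beta}|\geq\left(\frac{m-s}{s^2}\right)^{t+1}\geq\left(\tfrac{1}{2}m^{1-2\alpha}\right)^{t+1},
\end{align*}
where the last step uses $s\leq m^\alpha$ and $m-s\geq m/2$ for large $m$.

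Finally we compare with the target. Since $\alpha<\tfrac12$, for $m$ large we have $m^{\frac12-\alpha}\geq 2$, so $\tfrac12 m^{1-2\alpha}\geq m^{\frac12-\alpha}$. Since $t+1\geq m^\beta$ and the base exceeds $1$, we obtain $|Q_{m,\alpha,\beta}|\geq m^{(\frac12-\alpha)m^\beta}$. The conditions on $m_0$ are $s\geq t+1$, $m-s\geq m/2$, and $m^{\frac12-\alpha}\geq 2$.

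The main obstacle I expect is the binomial-ratio estimate: the exponent must come out as $t+1\geq m^\beta$ with a base polynomial in $m$. The crude bound via $s^{t+1}$ costs a factor $m^{\alpha}$ in the base, and that loss is exactly why only the exponent $\tfrac12-\alpha$, rather than $1-\alpha$, is claimed.
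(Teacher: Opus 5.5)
Your proof is correct. It uses the same greedy (Gilbert--Varshamov) packing skeleton as the paper, but counts the conflict neighbourhood differently.

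The paper phrases conflicts via symmetric difference: $|I\vartriangle J|\le 2t$ with $t=a-b$ and $b=\lceil m^\beta/2\rceil$, which for sets of equal size is the same as $|I\cap J|\ge b$. It counts this neighbourhood exactly as $\sum_{j=0}^{t}\binom{a}{j}\binom{m-a}{j}$. It then has to check that the summands increase in $j$ (via $a+t+(t+1)^2\in o(m)$) and carry out a factorial computation ending in $m^{(1-2\alpha)b}\,b!/(2^b(t+1))$.

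Your overcount $\binom{s}{t+1}\binom{m}{s-t-1}$, together with the telescoping ratio $\binom{m}{s}/\binom{m}{s-t-1}\ge\bigl((m-s)/s\bigr)^{t+1}$, is more elementary and needs no monotonicity argument. Because you use the full threshold $\lfloor m^\beta\rfloor$ instead of $\lceil m^\beta/2\rceil$, your intermediate bound $(\tfrac12 m^{1-2\alpha})^{\lfloor m^\beta\rfloor+1}$ is roughly $m^{(1-2\alpha)m^\beta}$, which is stronger than the claim. The paper's exact count is not needed for the stated bound.

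One small correction to your closing remark. The estimate $\binom{s}{t+1}\le s^{t+1}$ only accounts for the drop from $1-\alpha$ to $1-2\alpha$. The further halving to $\tfrac12-\alpha$ comes from your voluntary simplification $\tfrac12 m^{1-2\alpha}\ge m^{\frac12-\alpha}$; in the paper it comes from the choice of threshold $\lceil m^\beta/2\rceil$.
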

\begin{proof}
    For simplicity of notation we write $a:=\lfloor m^\alpha\rfloor$ and $b:=\lceil \tfrac{m^\beta}{2}\rceil$.
    Let $I\in \cP_{m^\alpha}([m])$, $k\in[2a]\cap 2\N$, and note that
    \begin{align}\label{eq:HamBallEst}
        n_k:=|\{J\in\cP_{m^\alpha}([m])\colon |I\vartriangle J|\leq k\}|\leq \sum_{j=0}^{\frac{k}{2}} \binom{a}{j}\binom{m-a}{j}.
    \end{align}
    This is due to the fact that any $J\in\cP_{m^\alpha}([m])$ with\footnote{Since $I$ and $J$ have the same number of elements, their symmetric difference must be a multiple of 2.} $|I\vartriangle J|= 2j$, $j\in[\tfrac{k}{2}]$, can be obtained by removing some choice of $j$ elements from $I$ and adding some choice of $j$ elements from $[m]\backslash I$, resulting in at most $\binom{a}{j}\binom{m-a}{j}$ possible ways to do so. Now let $Q_k$ be a set obtained by starting with the empty set and repeatedly adding an element $J$ of $\cP_{m^\alpha}([m])$ which satisfies $|I\vartriangle J| > k$ for all $I$ that are already in $Q_k$, until this is no longer possible.
    Due to \eqref{eq:HamBallEst}, it needs to hold that
    \begin{align*}
        |Q_k| \geq \frac{|\cP_{m^\alpha}([m])|}{n_k}
    \end{align*}
    We proceed by obtaining an upper bound on $n_{2t}$ for $t=a - b $.  
    Note that, due to $\alpha < \frac{1}{2}$, we have  $a+t+(t+1)^2\in o(m)$, which means that for sufficiently large $m$ the terms $\binom{a}{j}\binom{m-a}{j}$ are increasing in $j$ and thus
    \begin{align*}
       \sum_{j=0}^{t} \binom{a}{j}\binom{m-a}{j}\leq (t+1)\binom{a}{t}\binom{m-a}{t}.
    \end{align*}
    Consequently we get
    \begin{align*}
        \frac{|\cP_{m^\alpha}([m])|}{n_{2t}}
        &\geq \frac{\binom{m}{a}}{(t+1)\binom{a}{t}\binom{m-a}{t}}\\
        &=\frac{1}{(t+1)}\frac{m!}{a!(m-a)!}\frac{t!(a - t)!}{a!}\frac{t!(m-a-t)!}{(m-a)!}\\
        &=\frac{m!}{(m-a)!}\frac{(m-a-t)!}{(m-a)!}\left(\frac{t!}{a!}\right)^2\frac{(a-t)!}{(t+1)}\\
        &\geq (m-a)^a (m-a)^{-t} a^{-2b}\frac{b!}{(t+1)}\\
        &\geq (\frac{m}{2})^{b} m^{-2\alpha b}\frac{b!}{(t+1)}\\
        &\geq m^{(1-2\alpha)b} \frac{b!}{2^{b} (t+1)}\\
        &\geq m^{(\frac{1}{2}-\alpha) m^\beta},
    \end{align*}
    where we used that $m-a\geq \tfrac{m}{2}$ and $\frac{b!}{2^{b} t}\geq 1$ if $m$ is sufficiently large. Thus, all $q,q'\in Q_k\subseteq \cP_{m^\alpha}([m])$ with $q\neq q'$ satisfy 
    \begin{align*}
        |q\cap q'|=\tfrac{1}{2}(|q|+|q'|-|q\vartriangle q'| )< a-t = \lceil\tfrac{m^\beta}{2}\rceil\leq  m^\beta.
    \end{align*}
    Therefore the set $Q_k$ satisfies the desired properties and the claim is proven.    
\end{proof}

Combining this with \Cref{lem:F_nip} enables us to show that $H_{m,k,\alpha,\beta}$ contains large almost orthogonal sets.

\begin{lemma}\label{lem:WSet}
    Let $\alpha\in(0,\tfrac{1}{2})$, $\beta\in(0,\alpha)$, and $k\in\N$. Then there exists $m_0\in\N$ such that for every $m\geq m_0$ there is a set $H_{m,k,\alpha,\beta}\subseteq H_{m,k}$ such that
    \begin{align}\label{eq:pw_ao}
        \max_{h,h'\in H_{m,k,\alpha,\beta}\colon h\neq h'}\left|\left\langle \tfrac{h}{\|h\|_{L^2([0,1])}},\tfrac{h'}{\|h'\|_{L^2([0,1])}}\right\rangle\right|\leq 2m^{k(\beta-\alpha)}
    \end{align}
    and 
    \begin{align}\label{eq:H_cardinality}
        |H_{m,k,\alpha,\beta}| \geq \tfrac{1}{2k}m^{(\frac{1}{2}-\alpha) m^\beta}.
    \end{align}
\end{lemma}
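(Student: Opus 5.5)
The plan is to build $H_{m,k,\alpha,\beta}$ from tuples of sets taken from $Q_{m,\alpha,\beta}$ in \Cref{lem:Qset}, and to control their correlations with \Cref{lem:F_nip}.

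\textbf{Step 1 (tuples from a $k$-ary code).} Identify each $q\in Q:=Q_{m,\alpha,\beta}$ with its indicator vector in $\{0,1\}^m$; every such vector has $\|q\|_0=\lfloor m^\alpha\rfloor=:a$. Taking all tuples $\ba=(q_1,\dots,q_k)\in Q^k$ gives far too many pairs that agree in all but one coordinate. A coordinate that agrees contributes a factor $1$ to the normalized inner product in \Cref{lem:F_nip}, while a coordinate that differs contributes a factor at most $m^\beta/a$. So I would use the \emph{diagonal} choice instead: enumerate $Q=\{q_1,q_2,\dots\}$ and set
\begin{align*}
\ba_i:=(q_i,\dots,q_i), \qquad H_{m,k,\alpha,\beta}:=\{F_{\ba_i}\colon i\le |Q|\}.
\end{align*}
Then for $i\neq i'$ every coordinate differs.

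\textbf{Step 2 (correlation bound).} By \Cref{lem:F_nip}, the normalized inner product of $F_{\ba_i}$ and $F_{\ba_{i'}}$ equals
\begin{align*}
\prod_{j=1}^k \frac{|q_i\cap q_{i'}|}{a}\le \Bigl(\frac{m^\beta}{\lfloor m^\alpha\rfloor}\Bigr)^k .
\end{align*}
For large $m$ we have $\lfloor m^\alpha\rfloor\ge 2^{-1/k}m^\alpha$, which gives the bound $2m^{k(\beta-\alpha)}$ in \eqref{eq:pw_ao}. The inner product is nonnegative, so the absolute value causes no trouble.

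\textbf{Step 3 (cardinality).} I need the map $i\mapsto F_{\ba_i}$ to be injective, so that $|H_{m,k,\alpha,\beta}|=|Q|$. For distinct $i,i'$ the normalized inner product is at most $2m^{k(\beta-\alpha)}<1$ for large $m$. Both functions have the same norm by \Cref{lem:F_nip}, so they cannot coincide. Hence $|H_{m,k,\alpha,\beta}|=|Q|\ge m^{(\frac12-\alpha)m^\beta}\ge \tfrac{1}{2k}m^{(\frac12-\alpha)m^\beta}$, which is \eqref{eq:H_cardinality}.

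The factor $\tfrac{1}{2k}$ in the statement suggests the authors used a lossier construction, for example a pigeonhole over $k$ classes. The diagonal choice avoids that loss. The step that needs most care is confirming that \Cref{lem:F_nip} applies, namely that every $a^j\neq 0$; this holds since $a\ge1$ for $m\ge1$. The threshold $m_0$ is the maximum of the $m_0$ from \Cref{lem:Qset} and the thresholds used in Steps 2 and 3.
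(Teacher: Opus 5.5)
Your proof is correct, and it reaches the statement by a simpler route than the paper.

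The paper starts from the full product family $A_{m,k,\alpha,\beta}$ of all tuples $(a^1,\dots,a^k)$ with every $\iota(a^j)\in Q_{m,\alpha,\beta}$. It then thins this family greedily. Each tuple shares a coordinate with at most $k|Q_{m,\alpha,\beta}|^{k-1}$ tuples, so one can extract a subfamily of size at least $|Q_{m,\alpha,\beta}|^k/(1+k|Q_{m,\alpha,\beta}|^{k-1})\geq\tfrac{1}{2k}|Q_{m,\alpha,\beta}|$ whose members differ in every coordinate. This is exactly where the factor $\tfrac{1}{2k}$ comes from.

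Your diagonal tuples $(q,\dots,q)$ give such a family directly, of size exactly $|Q_{m,\alpha,\beta}|$, and no larger family is possible. Projection onto the first coordinate is injective on any family of tuples that pairwise differ in all coordinates; this is the Singleton bound for length-$k$ codes of minimum distance $k$ over the alphabet $Q_{m,\alpha,\beta}$. So the product structure and the greedy step gain nothing in cardinality and only cost the constant.

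Your injectivity argument is also more explicit than the paper's short remark that $0\notin\iota(Q_{m,\alpha,\beta})$: a normalized correlation strictly below $1$ forces the two functions to be distinct.

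The only thing the paper's route keeps is that the surviving tuples need not be constant. For the SPOON $V(k,\alpha,\beta,\mu)$ and the network bound in \Cref{lem:toyspoon_rates}, your $k$-fold self-compositions $f_q\circ\dots\circ f_q$ serve equally well. That argument only uses that each component $a^j$ has $\lfloor m^\alpha\rfloor$ nonzero entries.
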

\begin{proof}
    Note that  
    \begin{align*}
        \iota :\{a\in\{0,1\}^m\colon \|a\|_0=\lfloor m^\alpha\rfloor\}&\to \cP_{m^\alpha}([m])\\
        a&\mapsto \{i\in[m]\colon a_i=1\}
    \end{align*}
    is a bijection. Now let $Q_{m,\alpha,\beta}$ be a set satisfying the properties from \Cref{lem:Qset} and define
    \begin{align*}
        A_{m,k,\alpha,\beta}&:= \{\ba=(a^1,\dots,a^k)\colon \iota(a^1),\dots,\iota(a^k)\in Q_{m,\alpha,\beta}\}\subseteq A_{m,k}.
    \end{align*}
    Combining \Cref{lem:F_nip,lem:Qset} yields, for $\ba,\bb\in A_{m,k,\alpha,\beta}$ with $\forall j\in[k]\colon a^j\neq b^j$, that
    \begin{align*}
        \frac{\<F_{\ba}, F_{\bb}\>}{\|F_{\ba}\|_{L^2([0,1])}\|F_{\bb}\|_{L^2([0,1])}}
        =\frac{\prod_{j=1}^k\|a^j\wedge b^j\|_0}{\sqrt{\prod_{j=1}^k\|a^j\|_0\|b^j\|_0}}
        =\frac{\prod_{j=1}^k|\iota(a^j)\cap \iota(b^j)|}{\lfloor m^\alpha\rfloor^k}
        \leq 2m^{k(\beta-\alpha)},
    \end{align*}
    where we used that $\lfloor m^\alpha\rfloor^k\geq \tfrac{1}{2}m^{\alpha k}$ if $m\geq2^{\frac{k+1}{\alpha}}$. We note that, for $\ba\in A_{m,k,\alpha,\beta}$, we have
    \begin{align*}
        |\{\bb\in A_{m,k,\alpha,\beta}\colon\exists j\in[k]\colon a^j = b^j \}|\leq \sum_{j\in[k]}|\{\bb\in A_{m,k,\alpha,\beta}\colon a^j = b^j \}| \leq k |Q_{m,\alpha,\beta}|^{k-1}.
    \end{align*}
    In particular, for every $X\subseteq A_{m,k,\alpha,\beta}$ and $\ba\in X$, we can obtain a set $X'\subseteq X$ with $|X'|\geq |X| -k |Q_{m,\alpha,\beta}|^{k-1}$ and the property that for every $\bb\in X'\backslash\{\ba\}$ and $j\in[k]$, it holds that $a^j\neq b^j$. This is accomplished simply by removing every element from $X$ that does not satisfy this condition. As the condition of not having any matching components is symmetric, we can repeat this procedure without removing any previously chosen $\ba$.
    Consequently, for every $n\in \N$ with $n(1+k|Q_{m,\alpha,\beta}|^{k-1})\leq |A_{m,k,\alpha,\beta}|$, there exists a subset $A^*_{m,k,\alpha,\beta}\subseteq A_{m,k,\alpha,\beta}$ with $|A^*_{m,k,\alpha,\beta}|\geq n$ and
    \begin{align*}
        \max_{\ba,\bb\in A^*_{m,k,\alpha,\beta}\colon \ba\neq \bb}\left|\left\langle \tfrac{F_{\ba}}{\|F_{\ba}\|_{L^2([0,1])}},\tfrac{F_{\bb}}{\|F_{\bb}\|_{L^2([0,1])}}\right\rangle\right|\leq 2m^{k(\beta-\alpha)}.
    \end{align*}
    We observe 
    \begin{align*}
        \tfrac{|A_{m,k,\alpha,\beta}|}{(1+k|Q_{m,\alpha,\beta}|^{k-1})}\geq \tfrac{1}{2k}|Q_{m,\alpha,\beta}| \geq \tfrac{1}{2k}m^{(\frac{1}{2}-\alpha) m^\beta}
    \end{align*}
    and that $\ba\mapsto F_{\ba}$ is injective on $A_{m,k,\alpha,\beta}$ since $0\notin \iota(Q_{m,\alpha,\beta})$. This ensures the existence of a set $H_{m,k,\alpha,\beta}\subseteq H_{m,k}$ satisfying \eqref{eq:pw_ao} and \eqref{eq:H_cardinality}.
\end{proof}

This allows us to define the following family of SPOONs.

\begin{definition}
    Let $\alpha\in(0,\tfrac{1}{2})$, $\beta\in(0,\alpha)$, $\mu\in(0,\infty)$, $k\in\N$, and $m_0\in\N$ large enough such that $\Cref{lem:WSet}$ guarantees the existence of sets $H_{m,k,\alpha,\beta}$ satisfying \eqref{eq:pw_ao} and \eqref{eq:H_cardinality}.
    We define, for $m\geq m_0$,
    \begin{align*}
        V_m(k,\alpha,\beta,\mu):=\{m^{-\mu }\frac{h}{\|h\|_{L^2([0,1])}}\colon h\in H_{m,k,\alpha,\beta}\}.
    \end{align*}
    Moreover, we set $V_m(k,\alpha,\beta,\mu):=\emptyset$ for $m<m_0$. 
\end{definition}

We note that 
\begin{align*}
    V(k,\alpha,\beta,\mu):=(V_m(k,\alpha,\beta,\mu))_{m\in\N}   
\end{align*}
constitutes a $(\mu,k(\alpha-\beta))$-SPOON according to \Cref{def:SPOON}. Condition (ii) follows directly from the definition, and Conditions (i) and (iii) are ensured by \Cref{lem:WSet} upon noting that $2^{m^{\beta}}\in o(\tfrac{1}{2k}m^{(\frac{1}{2}-\alpha) m^\beta})$. 

We can now formulate the central result of this section.

\begin{proposition}\label{lem:toyspoon_rates}
Let $\rho\colon x\mapsto\max\{0,x\}$, $k\in\N$, $\alpha\in(0,\tfrac{1}{2})$, $\beta\in(0,\alpha)$, and $\mu\in(0,\infty)$. Then
\begin{align*}
    \gamma^{\cN}_\rho(V(k, \alpha,\beta,\mu)) \geq \tfrac{\mu}{\alpha}
\end{align*}
and 
\begin{align*}
    \Rgamma(V(k, \alpha,\beta,\mu))\leq \tfrac{1}{2}+\tfrac{\mu}{k(\alpha-\beta)}.    
\end{align*}
\end{proposition}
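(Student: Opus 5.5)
The upper bound on $\Rgamma$ is immediate. As noted after the definition of $V(k,\alpha,\beta,\mu)$, this sequence is a $(\mu,k(\alpha-\beta))$-SPOON. Applying \Cref{thm:main} with $\cC=V(k,\alpha,\beta,\mu)$, which trivially contains itself, and $\nu=k(\alpha-\beta)$ gives $\Rgamma(V(k,\alpha,\beta,\mu))\leq\tfrac12+\tfrac{\mu}{k(\alpha-\beta)}$. All the work therefore lies in the neural network lower bound.

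For $\gamma^{\cN}_\rho(V)\geq\tfrac{\mu}{\alpha}$, I fix $\gamma<\tfrac{\mu}{\alpha}$. Given a budget $M$, I set $m^*:=\lceil M^{\gamma/\mu}\rceil$. Every element of $V_m$ with $m\geq m^*$ has norm $m^{-\mu}\leq M^{-\gamma}$, so it is approximated by the zero network within $M^{-\gamma}$. For $m<m^*$ I approximate each $v\in V_m$ exactly. Write $v=m^{-\mu}F_{\ba}/\|F_{\ba}\|$ with $\ba\in A_{m,k}$ and $\|a^j\|_0=\lfloor m^\alpha\rfloor$ for all $j$. By \Cref{lem:F_nip}, the normalizing constant is $m^{-\mu}\big(3m^k/\lfloor m^\alpha\rfloor^k\big)^{1/2}$, which is polynomial in $m$.

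Each $f_{a^j}$ is a continuous piecewise linear function on $\R$. It is a sum of $\lfloor m^\alpha\rfloor$ hats, and each hat is a combination of three ReLUs: $f(m(x-\tfrac{i-1}{m}))=2\rho(mx-i+1)-4\rho(mx-i+\tfrac12)+2\rho(mx-i)$. This holds on all of $\R$, in particular outside $[0,1]$, where the $f_{a^j}$ must vanish. Hence $f_{a^j}$ is realized by a depth-2 ReLU network with $\cO(m^\alpha)$ nonzero weights, all of magnitude $\cO(m)$. Composing the $k$ layers by concatenating the affine maps yields a network of depth $\cO(k)$ (constant), connectivity $\cO(km^\alpha)$, and weights $\cO(m^2)$. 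Because the outputs of $f_{a^j}$ lie in $[0,1]$, the composition is exact. The scalar normalization is absorbed into the last layer, with magnitude polynomial in $m$. Since $m<m^*\lesssim M^{\gamma/\mu}$, the connectivity is $\cO(k M^{\gamma\alpha/\mu})\leq M$ for large $M$, because $\gamma\alpha/\mu<1$. The depth $\cO(k)\leq\pi(\log M)$ and weights $\leq\pi(M)$ hold for a suitable $\pi\in\cP_+$. Combining the two regimes, the uniform error is at most $M^{-\gamma}$, so $\gamma(V,\cS_\cN(\rho,\pi))\geq\gamma$. Letting $\gamma\uparrow\tfrac{\mu}{\alpha}$ gives the claim.

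The main obstacle is bookkeeping rather than ideas, namely making sure the parameter counts are legitimate. First, composing networks by multiplying the outer affine map of one block with the inner affine map of the next must not blow up connectivity. Here it does not, since the intermediate width is 1, so each merged matrix has $\cO(m^\alpha)$ entries. Second, weight sizes must remain polynomial in $M$. They do: all weights are powers of $m$, and $m\leq M^{\gamma/\mu}+1$, so they are polynomially bounded in $M$.
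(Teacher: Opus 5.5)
Your overall strategy is the paper's. The upper bound comes from \Cref{thm:main} applied to the $(\mu,k(\alpha-\beta))$-SPOON itself. The lower bound comes from representing every element of $V_m$ exactly by a ReLU network when $m$ is below a threshold and approximating by $0$ above it. Your threshold $M^{\gamma/\mu}$ with $\gamma\uparrow\tfrac{\mu}{\alpha}$, versus the paper's $(M/C_k)^{1/\alpha}$, is an inessential difference.

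The gap is in the step you yourself flag as the crux. Merging affine maps across a width-one intermediate layer does \emph{not} keep $\cO(m^\alpha)$ nonzero entries. Let $s=\lfloor m^\alpha\rfloor$. The output map of the block for $f_{a^j}$ is a row vector $v\in\R^{1\times 3s}$ with entries $2,-4,2,\dots$. The input map of the block for $f_{a^{j+1}}$ is a column vector $u\in\R^{3s\times 1}$ with all entries $m$. Their product $uv^\top\in\R^{3s\times 3s}$ is a dense rank-one matrix with $9s^2$ nonzero entries. The width-one bottleneck is therefore exactly what destroys sparsity. For $k\geq 2$ your network has connectivity of order $km^{2\alpha}$. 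The constraint $\cM\leq M$ then only permits $m\lesssim M^{1/(2\alpha)}$, which yields the rate $\tfrac{\mu}{2\alpha}$ rather than $\tfrac{\mu}{\alpha}$.

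The repair is simply not to merge. Each $f_{a^j}$ takes values in $[0,1]$, which you already observe, so $\rho$ acts as the identity on the scalar output of each block. You can therefore stack the layers of the depth-2 networks for $f_{a^1},\dots,f_{a^k}$ one after another, letting the activation act on the width-one layers in between. This realizes $F_{\ba}$ exactly with depth $2k$ and connectivity at most $9k\lfloor m^\alpha\rfloor$, and the normalization constant goes into the last layer as you describe. Alternatively, and independently of the sign of the intermediate values, use \Cref{lem:ReLUcomp} as the paper does. It passes the intermediate scalar through $x=\rho(x)-\rho(-x)$ and costs only a factor $2$ in connectivity per composition. With either fix, the rest of your argument (weight magnitudes polynomial in $M$, constant depth, and error at most $M^{-\gamma}$ in both regimes) goes through unchanged.
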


\begin{proof}
We start by noting that $f$ can be written as
\begin{align*}
    f(x)= 2\rho(x)-4\rho(x-\tfrac{1}{2})+2\rho(x-1).
\end{align*}
    Thus, for every $m\in\N$ and $a\in\{0,1\}^m$, there exists a neural network $\Theta_a$ with $\cL(\Theta_a)=2$, $\cM(\Theta_a)\leq 9\|a\|_0$, $\cB(\Theta_a)\leq \max\{4,m\}$, and
\begin{align*}
    \cR_\rho(\Theta_a)=f_a=\sum_{j=1}^m a_j f(m(\,\cdot\,-\tfrac{j-1}{m})).
\end{align*}
Note that 
\begin{align*}
    \frac{m^{-\mu}}{\|F_{\ba}\|_{L^2([0,1])}} \leq \sqrt{3}m^{\frac{k}{2}(1-\alpha)-\mu}.
\end{align*}
By \Cref{lem:ReLUcomp} there consequently exists a constant $C_k$ such that, for every $\ba\in A_{m,k}$, there is a neural network $\Psi_{\ba}$ with $\cL(\Psi_{\ba})=2k$, $\cM(\Psi_{\ba})\leq C_k m^\alpha$, $\cB(\Psi_{\ba})\leq \max\{4,m,\sqrt{3}m^{\frac{k}{2}(1-\alpha)-\mu}\}$, and
\begin{align*}
    \cR_\rho(\Psi_{\ba})=\frac{m^{-\mu}}{\|F_{\ba}\|_{L^2([0,1])}}F_{\ba}.
\end{align*}
In particular we have, for $m\leq (\tfrac{M}{C_k})^{\frac{1}{\alpha}}$, that
\begin{align*}
    H_{m,k,\alpha,\beta}\subseteq S_M=\left\{ \cR_\rho(\Theta) \colon \Theta\in\cN_{d,d'}, \cM(\Theta)\leq M, \cL(\Theta)\leq \pi(\log(M)), \cB(\Theta)\leq\pi(M)\right\},
\end{align*}
where $\pi$ is a polynomial of degree $\lceil\tfrac{1}{\alpha}\rceil(\tfrac{k}{2}(1-\alpha)-\mu)$. Thus, we get
\begin{align*}
    \sup_{f\in V_m}\infp_{h\in S_M}\|f-h\|_{L^2([0,1])}=0 
\end{align*}
for $m\leq (\tfrac{M}{C_k})^{\frac{1}{\alpha}}$ as well as 
\begin{align*}
\sup_{f\in V_m}\infp_{h\in S_M}\|f-h\|_{L^2([0,1])} \leq \sup_{f\in V_m}\|f\|_{L^2([0,1])}\leq m^{-\mu}\leq (\tfrac{M}{C_k})^{-\frac{\mu}{\alpha}}   
\end{align*}
for $m > (\tfrac{M}{C_k})^{\frac{1}{\alpha}}$ since $0\in S_M$.
Consequently we have
\begin{align*}
    \sup_{f\in \bigcup_{m\in\N}V_m}\infp_{h\in S_M}\|f-h\|_{L^2([0,1])}\in\cO(M^{-\frac{\mu}{\alpha}}),
\end{align*}
which means $\gamma^{\cN}_\rho(V(k, \alpha,\beta,\mu)) \geq \tfrac{\mu}{\alpha}$.
The upper bound on $\ogamma(\cC)$ follows from \Cref{thm:main} and \Cref{lem:WSet}.
\end{proof}
\begin{remark}
    The proof simplifies for neural networks with the ReLU activation function, as they achieve exact representation up to a certain $m$.
    It should, however, be straightforward to adapt the proof for neural networks with any activation function as long as they can efficiently approximate the basic hat function $f$. 
\end{remark}
\begin{remark}
    Note that for the choice of $\mu=\sqrt{k}$, we obtain examples of function classes for which we get an arbitrarily large gap between compositional and Riesz approximation rates. More specifically,
    \begin{align*}
        \lim_{k\to\infty}\gamma^{\cN}_\rho(V(k, \alpha,\beta,\sqrt{k})) =\infty
    \end{align*}
    whereas
    \begin{align*}
    \lim_{k\to\infty}\textbf{}\Rgamma(V(k, \alpha,\beta,\sqrt{k}))\leq \tfrac{1}{2}.    
    \end{align*}
\end{remark}
Another way to look at this is to consider the sets of all functions in $L^2([0,1])$, which can be approximated by ReLU neural networks with at least a given rate $\gamma$, i.e.\@
\begin{align*}
    \cC_{\rho,\gamma}:=\bigcup_{\substack{\cC\subseteq L^2([0,1])\\\gamma^{\cN}_\rho(\cC)\geq \gamma }} \cC.
\end{align*}
Due to \Cref{lem:toyspoon_rates} we know that
\begin{align*}
    V(k, \alpha,\beta,\mu) \subseteq \cC_{\rho,\gamma}
\end{align*}
if $\tfrac{\mu}{\alpha}\geq\gamma$. Thus
\begin{align*}
    \Rgamma(\cC_{\rho,\gamma})\leq \inf_{\substack{\alpha\in(0,\frac{1}{2}), \beta\in(0,\alpha),\\\mu\in(\gamma\alpha,\infty),k\in\N}} \Rgamma(V(k, \alpha,\beta,\mu))\leq\tfrac{1}{2}.
\end{align*}
This is essentially due to the fact that the Riesz approximation rate decreases with the number of compositions $k$, whereas $k$ does not affect the rate for neural networks, but only the implicit constants of the asymptotics.  

\newpage

\appendix

\section{Excessive Orthogonality Example}\label{sec:lower_end}

\begin{lemma}
Let $\cH$ be a Hilbert space, let $(e_k)_{k\in\N}\subseteq\cH$ be an orthonormal system, $(s_k)_{k\in\N}$ be increasing, $(\delta_k)_{k\in\N}$ be decreasing, let $\rho\colon x\mapsto\max\{0,x\}$, and
\begin{align*}
     \C_{s,\delta}:=\{\sum_{k\in\N} \alpha_k e_k\colon |\{i\leq k\colon \alpha_i\neq 0\}|\leq s_k, |\alpha_k|\leq\delta_k \ \forall k\in\N\}.
\end{align*} 
If there exist $a,b,c_1,c_2,c_3\in(0,\infty)$ with $b>a$ such that, for every $k\in\N$, it holds that $\delta_k\in (c_1\log(k)^{-b},c_2 \log(k)^{-b})$ and $s_k\in [1,c_3\log(k)^{a})$, then
\begin{align}\label{eq:sgamma_minusinfty}
    \sgamma(\C_{s,\delta}) = -\infty.
\end{align}
Furthermore, if there exist $d,d'\in\N$, $\Omega\subset\R^d$, $q,p\in(0,1)$, $c\in(0,\infty)$, and $\pi\in\cP_+$, such that $\cH\subseteq \{f\colon\Omega\to\R^{d'}\}$ and that for every $\eps\in(0,\tfrac{1}{2})$, $k\in\N$, there is a neural network $\psi_{\eps,k}\in\cN_{d,d'}$ which satisfies
$\|e_k-\cR_\rho(\psi_{\eps,k})\|_{\cH}\leq \eps$, $\cM(\psi_{\eps,k})\leq c\eps^{-q}\log(k)^{p}$, $\cL(\psi_{\eps,k})\leq\pi(\log(\cM(\psi_{\eps,k})))$, $\cB(\psi_{\eps,k})\leq\pi(\cM(\psi_{\eps,k}))$, then
\begin{align*}
    \gamma^{\cN}_{\rho}(\C_{s,\delta})\geq \tfrac{b-a}{a+p+qb}.
\end{align*}

\end{lemma}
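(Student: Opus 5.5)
For \eqref{eq:sgamma_minusinfty} I would argue by contradiction. Suppose $\gamma(\C_{s,\delta},\cS(\cD,\pi))\geq\gamma>0$ for some dictionary $\cD$ and some $\pi\in\cP_+$. Then $\eps_M:=\sup_{f\in\C_{s,\delta}}\inf_{h\in S_M}\|f-h\|\leq CM^{-\gamma}$ for $M$ large.

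Fix $M$ and choose $K$ as the largest index with $\delta_K>2\eps_M$. The bound $\delta_k\geq c_1\log(k)^{-b}$ then gives $\log K\gtrsim \eps_M^{-1/b}\gtrsim M^{\gamma/b}$. So $K$ is superpolynomial in $M$: we have $K\geq \exp(cM^{\gamma/b})$. Since $s_k\geq1$, the single elements $\delta_k e_k$ lie in $\C_{s,\delta}$ for every $k\leq K$.

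Each approximant lies in $\<\phi_i\colon i\in[\pi(M)]\>$, a subspace $U$ of dimension at most $\pi(M)$. Hence $\operatorname{dist}(\delta_k e_k,U)\leq \eps_M<\delta_k/2$ for all $k\leq K$. Let $P$ be the orthogonal projection onto $U$. Then $\|Pe_k\|^2\geq 1-\tfrac14$ for all $k\leq K$, and therefore
\begin{align*}
\tfrac34K\leq\sum_{k\leq K}\|Pe_k\|^2\leq \operatorname{tr}(P)\leq\pi(M).
\end{align*}
This contradicts $K\geq\exp(cM^{\gamma/b})$. The argument uses only the $1$-sparse elements of $\C_{s,\delta}$, and it does not need any Riesz property or coefficient bound. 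The step to handle with care is the conversion of $\eps_M$ into a lower bound on $K$ through the monotonicity of $\delta$.

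For the lower bound on $\gamma^\cN_\rho(\C_{s,\delta})$ I would build an approximant for every $f\in\C_{s,\delta}$. Given $\eps$, set $K_\eps:=\min\{k\colon c_2\log(k)^{-b}\leq\eps\}$, so that $\log K_\eps\sim\eps^{-1/b}$. Split $f=f_{\leq K_\eps}+f_{>K_\eps}$ according to the indices. The tail is not small in $\cH$-norm by the coefficient bound alone, so we use sparsity. Group the indices dyadically in $\log k$, namely $k\in(2^{2^{j}},2^{2^{j+1}}]$. The block $j$ contains at most $s\lesssim 2^{(j+1)a}$ nonzero coefficients, each of size at most $\lesssim2^{-jb}$. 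Its norm is therefore $\lesssim 2^{j(a/2-b)}$, and summing the blocks over $j\geq\log\log K_\eps$ gives a tail bound $\lesssim \eps^{(b-a/2)/b}$, which is even $\leq\eps$. For the head, $f_{\leq K_\eps}$ has at most $s_{K_\eps}\lesssim\eps^{-a/b}$ nonzero terms. Approximate each $e_k$ in it by $\psi_{\eps',k}$ with $\eps'=\eps/s_{K_\eps}$, scale by $\alpha_k$ (which is at most $1$), and sum the networks in parallel; this parallelisation also pads depths, with a cost absorbed into polylog factors. The total connectivity is
\begin{align*}
\lesssim s_{K_\eps}\,(\eps/s_{K_\eps})^{-q}(\log K_\eps)^{p}\lesssim \eps^{-a/b}\,\eps^{-q(1+a/b)}\,\eps^{-p/b},
\end{align*}
up to constants. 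The depth stays polylogarithmic and the weights polynomial. Inverting $M\sim\eps^{-(a+p+qb+qa)/b}$ gives a rate of order $b/(a+p+qb+qa)$, which is close to, but not exactly equal to, the claimed $\frac{b-a}{a+p+qb}$.

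The main obstacle is to obtain exactly the claimed exponent. I expect it requires a cruder tail bound in place of the one above. Truncate at $K$ with $s_K\delta_K^{\,}\approx\eps$, i.e.\ bound the tail by $\sum|\alpha_k|$-type estimates giving $\log K\sim\eps^{-1/(b-a)}$. Then approximate each $e_k$ only to accuracy $\eps$, exploiting $|\alpha_k|\leq\delta_k\lesssim1$ and the number of terms to control the error. Substituting this $K$ yields $M\sim\eps^{-(a+p+qb)/(b-a)}$. I would first try this substitution and then check which estimate of the tail and of the accumulated errors the authors' exponent corresponds to.
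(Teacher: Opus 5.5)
Your argument for $\sgamma(\C_{s,\delta})=-\infty$ is correct and is essentially the paper's. The bound $\sum_{k\le K}\|Pe_k\|^2\le\dim U$ is exactly the content of Lemma~\ref{lem:aux1}; the paper applies it to the elements $\delta_k e_j$, $j\le k$, where you use $\delta_k e_k$, $k\le K$.

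The second part has a genuine gap. Your dyadic block estimate correctly gives a tail of order $(\log K)^{a/2-b}$. With $\log K_\eps\sim\eps^{-1/b}$, however, this is $\eps^{(b-a/2)/b}=\eps\cdot\eps^{-a/(2b)}$, which is \emph{larger} than $\eps$ for $\eps<1$, not smaller. So truncating where the individual coefficients drop below $\eps$ does not give an $\eps$-approximation, and the rate $b/(a+p+qb+qa)$ is not established. The truncation index has to be chosen from the tail bound, not from the coefficient bound.

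The paper does this with a cruder triangle-inequality estimate. The $t$-th nonzero coefficient sits at an index $j$ with $s_j\ge t$, hence $|\alpha_j|\lesssim t^{-b/a}$. Summing gives $\|\sum_{k>n}\alpha_k e_k\|\lesssim\log(n)^{a-b}$, which is where $b>a$ is needed, so one takes $\log n_\eps\sim\eps^{-1/(b-a)}$. Each of the at most $s_{n_\eps}\sim\eps^{-a/(b-a)}$ head terms is then approximated to accuracy $\eta=\eps/(2s_{n_\eps})$, so the accumulated error is at most $s_{n_\eps}\eta=\eps/2$. The connectivity $s_{n}\eta^{-q}\log(n)^p$ is then exactly of order $\eps^{-(a+p+qb)/(b-a)}$.

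Your last paragraph gestures at this route but is internally inconsistent. If you approximate each $e_k$ only to accuracy $\eps$ and control the error via $|\alpha_k|\lesssim1$ and the number of terms, the accumulated error is $s_K\eps$, not $\eps$. The $qb$ in the exponent comes precisely from choosing $\eta=\eps/(2s_K)$.

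You also do not need to hit the exponent exactly: the statement is a lower bound, so any larger rate suffices. Had your first tail bound been right, $b/(a+p+qb+qa)$ would already have been enough, since it is at least $\frac{b-a}{a+p+qb}$. More usefully, your $\ell^2$ block estimate can be repaired. Invert it correctly to $\log K\sim\eps^{-1/(b-a/2)}$ and combine it with $\eta=\eps/(2s_K)$; this gives the rate $\frac{b-a/2}{a+p+qb+qa/2}$. This exceeds $\frac{b-a}{a+p+qb}$, since the difference of the cross products is $\frac a2(a+p+qa)>0$. So your orthogonality-based route works once the truncation level is fixed, and it is sharper than the paper's $\ell^1$-type tail bound.
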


\begin{proof}
We first prove \eqref{eq:sgamma_minusinfty}. Assume, towards a proof by contradiction, that there exists a dictionary $\D=(\phi_i)_{i\in\N}\subseteq \cH$, $C,r\in(0,\infty)$, and $\tau,M_0\in\N$ such that, for every $M\geq M_0$, we have
    \begin{align*}
        \eps_M(\C_{s,\delta}):=\sup_{f\in\C_{s,\delta}}\infp_{h\in \<\cD_M\>_M}\|f-h\|_{\cH}\leq C M^{-r},
    \end{align*}
where $\cD_M=(\phi_i)_{i\in[M^\tau]}$.
We note that for $k,M\in\N$,  it holds that
\begin{align*}
        \eps_M(\C_{s,\delta})
        &=\sup_{f \in \C_{s,\delta}} \inf_{\substack{I\subseteq [M^\tau], \\  |I| = M, \, |c_i|\leq\pi(M)}} \left\|f - \sum_{i \in I} c_i \phi_i\right\|_{\cH}\\
        &\geq \sup_{j\in[k]}  \inf_{\substack{I\subseteq [M^\tau], \\  |I| = M, \, c_i\in\R}}\|\delta_k e_j - \sum_{i\in I} c_i\phi_i\|_{\cH}\\
        &\geq \max_{j\in[k]} \min_{h\in S_M} \|\delta_k e_j - h\|_{\cH},
\end{align*}
where $S_M$ is the span of $\phi_1,\dots,\lfloor M^\tau\rfloor$. Applying Lemma \ref{lem:aux1} we obtain, for all $k\in\N$, $M\geq M_0$ that
\begin{align*}
 \delta_k (1 - \sqrt{\tfrac{\lfloor M^\tau\rfloor}{k}}) \leq CM^{-r}.
\end{align*}
Rearranging the inequality and plugging in $\delta_k\geq c_1\log(k)^{-b}$ yields
\begin{align*}
     \sqrt{\tfrac{M^\tau}{k}} \geq 1 - \tfrac{C}{c_1}\log(k)^b M^{-r}.
\end{align*}
In particular this must hold for every $M\geq M_0$ and 
\begin{align*}
k_M:=\lfloor 2^{((\frac{c_1M^r}{2C})^{\frac{1}{b}})} \rfloor,
\end{align*}
in which case we get 
\begin{align*}
    \tfrac{C}{c_1}\log(k_M)^b M^{-\gamma}\leq \tfrac{1}{2},
\end{align*}
and consequently
\begin{align*}
    M^\tau\geq \tfrac{k_M}{4} \geq 2^{((\frac{c_1M^r}{2C})^{\frac{1}{b}})-3}.
\end{align*}
Since $M^\tau \in o(2^{((\frac{c_1M^r}{2C})^{\frac{1}{b}})-3})$ we have established a contradiction and thus proved \eqref{eq:sgamma_minusinfty}.

In order to prove \eqref{eq:Ngamma_pos}, we first note that $\cC_{s',\delta}\subseteq\cC_{s,\delta}$ if $s'_k\leq s_k$ for every $k\in\N$, and we can therefore w.l.o.g.\@ assume that $s_k=c_3\log(k)^{a}$.
We proceed by estimating the truncation error for 
\begin{align*}
    f=\sum_{k=1}^\infty \alpha_k e_k\in\C_{s,\delta}.
\end{align*}
Observe that, for $n\geq 3$, we have
\begin{align*}
    T_n(f):=\|\sum_{k=n+1}^\infty \alpha_k e_k\| \leq \lceil s_n\rceil \delta_n +\sum_{ s = \lceil s_n\rceil+1}^\infty \max_{j\colon s_j> s-1} \delta_j,
\end{align*}
where for the first $\lceil s_n\rceil$ many non-zero coefficients in the above sum we only use that they cannot be larger than $\delta_n$, whereas for any further coefficient we use that the $s$-th non-zero coefficient must have an index $j$ such that $s_j>s-1$.\\
Observe that $(\delta_k)_{k\in\N}$ is a decreasing sequence, and thus it holds for $s\in\N$ that
\begin{align*}
    \max_{j\colon s_j >s} \delta_j = \max_{j\colon j > 2^{((c_3^{-1}s)^{\frac{1}{a}})}}\delta_j\leq c_2\log(2^{((c_3^{-1}s)^{\frac{1}{a}})})^{-b}=c_2 c_3^{\frac{1}{a}}s^{-\frac{b}{a}}.
\end{align*}
We further note that for $t\in\N$ and $z>1$ it holds that
\begin{align*}
    \sum_{s=t}^\infty s^{-z} \leq\sum_{s=t}^\infty \int_{s-1}^s x^{-z}\mathrm{d}x =\int_{t-1}^\infty  x^{-z}\mathrm{d}x
    \leq \frac{1}{z-1}(t-1)^{-z+1}.
\end{align*}
Combining the above we get
\begin{align*}
    T_n(f)
    &\leq \lceil s_n\rceil \delta_n +\sum_{ s = \lceil s_n\rceil+1}^\infty \max_{j\colon s_j> s-1} \delta_j\\
    &\leq c_2c_3\lceil\log(n)^a\rceil \log(n)^{-b} + \sum_{ s = \lceil s_n\rceil}^\infty \max_{j\colon s_j> s} \delta_j\\
    &\leq 2c_2c_3\log(n)^{a-b} + \sum_{ s = \lceil s_n\rceil}^\infty c_2 c_3^{\frac{1}{a}}s^{-\frac{b}{a}}\\
    &\leq 2c_2c_3\log(n)^{a-b} + c_2 c_3^{\frac{1}{a}}\tfrac{1}{\frac{b}{a}-1}(\lceil s_n \rceil -1)^{-\frac{b}{a}+1}\\
    &\leq 2c_2c_3\log(n)^{a-b} + c_2 c_3^{\frac{1}{a}}\tfrac{a}{b-a}(\tfrac{c_3}{2})^{-\frac{b}{a}+1}\log(n)^{a-b}\\
    &=C_{a,b} \log(n)^{a-b},
\end{align*}
where $C_{a,b}:= 2c_2c_3+ c_2 c_3^{\frac{1}{a}}\tfrac{a}{b-a}(\tfrac{c_3}{2})^{-\frac{b}{a}+1}$.
We can now employ \Cref{NN_lincomb} to obtain, for every $f\in\C_{s,\delta}$ and $\eps\in(0,1/2)$, a network $\Psi_{f,\eps}$ satisfying
\begin{align*}
    \cR_\rho(\Psi_{f,\eps})=\sum_{k\in[n], \alpha_k \neq 0} \alpha_k \cR_\rho(\psi_{\eta,k})
\end{align*}
with
\begin{align*}
    n=n_\eps:= \left\lceil 2^{((\frac{\eps}{2C_{a,b}})^{\frac{1}{a-b}})}\right\rceil,\quad \eta=\eta_\eps:=\tfrac{\eps}{2 s_{n}}.
\end{align*}
Consequently we obtain
\begin{align*}
    \|f- \cR_\rho(\Psi_{f,\eps})\|&\leq T_n + \|\sum_{k\in[n], \alpha_k \neq 0} \alpha_k(e_k - \cR_\rho(\psi_{\eta,k}))\|
\leq C_{a,b}\log(n)^{a-b} + s_n \eta
\leq \eps.
\end{align*}
Next we estimate their connectivity as
\begin{align*}
    \M(\Psi_{f,\eps})
    &\leq2 \sum_{k\in[n], \alpha_k \neq 0} (\M(\psi_{\eta,k}) + \cL(\psi_{\eta,k}))\\
    &\leq 2s_n(c\eta^{-q}\log(n)^p + \pi(\log(c\eta^{-q}\log(n)^p)) \\
    &\in\cO_{\eps\to0}(\eps^{-\frac{a+p+qb}{b-a}}).
\end{align*}
Moreover, we have $\cL(\Psi_{f,\eps})\leq\max_{k\in[n]}\cL(\psi_{\eta,k})$ and $\cB(\Psi_{f,\eps})\leq \max_{k\in[n]}(|\alpha_k|\max\{1,\cB(\psi_{\eta,k})\})$, and since $\max_{k\in[n]}\cM(\psi_{\eta,k})\leq \cM(\Psi_{f,\eps})$, there clearly exists $\pi'\in\cP_+$ such that $\cL(\Psi_{f,\eps})\leq\pi'(\log(\cM(\Psi_{f,\eps})))$ and $\cB(\Psi_{f,\eps})\leq\pi'(\cM(\Psi_{f,\eps}))$. Therefore, we have
\begin{align}\label{eq:Ngamma_pos}
    \gamma^{\cN}_{\rho}(\C_{s,\delta})\geq \tfrac{b-a}{a+p+qb}.
\end{align}
\end{proof}

\section{Technical Lemmas and Proofs}\label{sec:proofs}
\begin{lemma}[Composition of ReLU neural networks]\label{lem:ReLUcomp}
    Let $\rho\colon x\mapsto\max\{0,x\}$, $d_1,d_2,d_3\in\N$, $\Phi_1\in \cN_{d_1,d_2}$, and $\Phi_2\in \cN_{d_2,d_3}$. Then there exists a network $\Psi\in\cN_{d_1,d_3}$ with $\cL(\Psi)=\cL(\Phi_1)+\cL(\Phi_2)$, $\cM(\Psi)\leq 2\cM(\Phi_1)+2\cM(\Phi_2)$, $\cB(\Psi)=\max\{\cB(\Phi_1),\cB(\Phi_2)\}$, and
    \begin{align*}
        \cR_\rho(\Psi) = \cR_\rho(\Phi_2)\circ\cR_\rho(\Phi_1).
    \end{align*}
\end{lemma}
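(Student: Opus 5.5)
We would prove \Cref{lem:ReLUcomp} by the standard concatenation construction, taking care that the connectivity bound does not depend on the widths. Write $\Phi_1=(A_\ell,b_\ell)_{\ell=1}^{L_1}$ and $\Phi_2=(C_\ell,e_\ell)_{\ell=1}^{L_2}$. The naive merge, which composes the last affine map of $\Phi_1$ with the first affine map of $\Phi_2$, gives depth $L_1+L_2-1$. That would blow up the weight magnitude (entries of $C_1A_{L_1}$ are sums of products) and could also blow up the number of nonzeros. So instead we insert an extra ReLU layer via the identity $x=\rho(x)-\rho(-x)$. This gives depth exactly $L_1+L_2$, which is what the statement asks for.

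Concretely, we define $\Psi$ by the following layers.
\begin{align*}
\Psi:=\Big((A_1,b_1),\dots,(A_{L_1-1},b_{L_1-1}),\Big(\begin{pmatrix}A_{L_1}\\-A_{L_1}\end{pmatrix},\begin{pmatrix}b_{L_1}\\-b_{L_1}\end{pmatrix}\Big),\big((C_1\ \ -C_1),e_1\big),(C_2,e_2),\dots,(C_{L_2},e_{L_2})\Big).
\end{align*}
If $L_1=1$, the first block consists only of the doubled layer, and the same construction applies.

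The verification has three routine steps.

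First, realization. After the doubled layer the hidden vector is $(\rho(y),\rho(-y))$, where $y=\cR_\rho(\Phi_1)(x)$. Applying $(C_1\ -C_1)$ then gives $C_1(\rho(y)-\rho(-y))+e_1=C_1y+e_1$. From there the computation is exactly that of $\Phi_2$ with input $y$, so $\cR_\rho(\Psi)=\cR_\rho(\Phi_2)\circ\cR_\rho(\Phi_1)$.

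Second, the parameter counts. The depth is $(L_1-1)+1+1+(L_2-1)=L_1+L_2$. For connectivity, only the two modified layers change their nonzero counts, and each exactly doubles: $\|[A_{L_1};-A_{L_1}]\|_0+\|[b_{L_1};-b_{L_1}]\|_0=2(\|A_{L_1}\|_0+\|b_{L_1}\|_0)$, and $\|(C_1\ -C_1)\|_0=2\|C_1\|_0$ while $\|e_1\|_0$ is unchanged. Summing over all layers gives $\cM(\Psi)\le 2\cM(\Phi_1)+2\cM(\Phi_2)$.

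Third, the weight magnitude. Every entry of $\Psi$ equals $\pm$ an entry of $\Phi_1$ or $\Phi_2$. Conversely, every entry of $\Phi_1$ and $\Phi_2$ appears (up to sign) somewhere in $\Psi$. Hence the maximum absolute entry is exactly $\max\{\cB(\Phi_1),\cB(\Phi_2)\}$, giving equality rather than just an upper bound.

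The only point that needs attention is the choice of construction. One must avoid the merged-layer product $C_1A_{L_1}$, because it breaks both the equality for $\cB$ and the exact depth count. With the $\rho(x)-\rho(-x)$ trick, everything else is bookkeeping, including the edge case $L_1=1$ handled above.
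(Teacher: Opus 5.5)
Your proposal is correct and is essentially the proof the paper relies on: the paper only cites Lemma II.3 of \cite{DeepTheory}, and that proof uses the same identity $x=\rho(x)-\rho(-x)$. It likewise replaces the last affine map of $\Phi_1$ by its stacking with $\pm I_{d_2}$ and precomposes the first affine map of $\Phi_2$ with $(I_{d_2}\ \ -I_{d_2})$, which gives exactly your $\Psi$ and the same depth, connectivity and weight-magnitude bookkeeping.
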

\begin{proof}
    See Lemma II.3 in \cite{DeepTheory}.
\end{proof}

\begin{lemma}\label{NN_lincomb}
    Let $n,d,d',L\in\N$, and, for $i\in[n]$, let $a_i\in\R$ and $\Phi_i\in\cN_{d,d'}$ with $\cL(\Phi_i)\leq L$. Then there exists a network $\Psi\in\cN_{d,d'}$ with $\cL(\Psi)=L$, $\cM(\Psi)\leq 2\sum_{i\in[n]}(\cM(\Phi_i)+\cL(\Phi_i))$, $\cB(\Psi) \leq \max_{i\in[n]}(|a_i|\max\{1,\cB(\Phi_i)\})$, and 
    \begin{align*}
        \cR_\rho(\Psi)=\sum_{i\in[n]}a_i\cR_\rho(\Phi_i).
    \end{align*}
\end{lemma}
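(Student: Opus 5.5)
This is a standard construction: each $\Phi_i$ is padded to common depth $L$ with identity-type ReLU layers, the padded networks are placed in parallel, and the scaled outputs are summed in the last layer.

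\emph{Depth padding.} For $i\in[n]$ with $\cL(\Phi_i)=L_i<L$, I extend $\Phi_i$ to depth exactly $L$ without changing its realization. The identity on $\R^{d'}$ is written as $x=\rho(x)-\rho(-x)$. Concretely, I replace the last affine map $W_{L_i}$ by the pair $(W_{L_i},-W_{L_i})$, then apply ReLU layers that pass each of the two halves through a diagonal $1$ entry. Finally I recombine with the matrix $(I,-I)$.

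This costs one doubling of the nonzero weights in the final layer. It adds at most $2d'$ nonzero weights per added layer. Bookkeeping in the style of Lemma~II.3 of \cite{DeepTheory} keeps this within $2(\cM(\Phi_i)+\cL(\Phi_i))$. That requires charging the padding layers to $\cL(\Phi_i)$, or more carefully to the output dimension. This is the one step I expect to need care with: the bound $\cM\leq 2(\cM+\cL)$ must be checked against the actual output width $d'$. If it fails, I would first try to restrict padding to a single scalar channel per output coordinate, or absorb it into the connectivity of the final layer. The new weights are $\pm1$ or copies of old weights, so the weight magnitude is at most $\max\{1,\cB(\Phi_i)\}$.

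\emph{Parallelization.} All padded networks $\tilde\Phi_i$ now have depth $L$ and share the same input. I stack their first-layer matrices vertically, which gives a common input of dimension $d$. I place all intermediate matrices block-diagonally and concatenate the biases. Connectivity is additive across the blocks, and weights are unchanged.

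\emph{Summation.} In the last layer I replace the stacked output matrices $A_L^{(i)}$ and biases $b_L^{(i)}$ by the single map $x\mapsto\sum_i a_i(A_L^{(i)}x^{(i)}+b_L^{(i)})$. In other words, I form the horizontal concatenation $(a_1A_L^{(1)},\dots,a_nA_L^{(n)})$ with bias $\sum_i a_ib_L^{(i)}$. This does not increase the connectivity beyond the sum. The last-layer weights are bounded by $|a_i|\max\{1,\cB(\Phi_i)\}$. For the summed bias, to avoid magnitude growth I would instead route each bias $b_L^{(i)}$ through a constant neuron, or simply keep the biases separated inside the matrix product. The realization is then $\sum_i a_i\cR_\rho(\Phi_i)$, and the depth is exactly $L$.
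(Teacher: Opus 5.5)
Your pad--parallelize--sum construction is the same route the paper takes by citing Lemmas II.4 and A.8 of \cite{DeepTheory}. However, the step you flagged does fail, and so does your weight-magnitude bookkeeping.

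\textbf{Connectivity.} Padding $\Phi_i$ your way from depth $L_i:=\cL(\Phi_i)$ to $L$ adds $\|A^{(i)}_{L_i}\|_0+\|b^{(i)}_{L_i}\|_0+2d'(L-L_i)$ nonzero entries. There are $L-L_i$ padding layers, not $L_i$, so nothing can be charged to $\cL(\Phi_i)$. Neither fallback helps: you already use a single pair of channels per output coordinate, and the padding layers carry their own nonzeros, which cannot be absorbed into the final layer. Summed over $i$, the overhead is of order $nd'L$, e.g.\ for $n-1$ depth-one networks and a single deep one, whereas the claimed bound can be of order $n+L$.

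\textbf{Weights.} The inner layers of $\Psi$ contain the weights of the $\Phi_i$ and the padding weights $1$. These are bounded by $\max_i\max\{1,\cB(\Phi_i)\}$, but not by $\max_i|a_i|\max\{1,\cB(\Phi_i)\}$ when the $|a_i|$ are small. Moreover, for $L=1$ your vertical stacking and horizontal concatenation act on the same layer, so the entries of $\Psi$ are the sums $\sum_ia_iA^{(i)}_1$.

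\textbf{The statement as written is false, so these steps cannot be repaired.}
(a) Take $n=1$, $d=d'=1$, $\Phi_1=((1,0))$, $a_1=1$, $L=5$. Every depth-$5$ network with nonconstant realization has $A_\ell\neq0$ for all $\ell$, so $\cM(\Psi)\geq5>4$.

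(b) The failure persists for $L=\max_i\cL(\Phi_i)$, the case used in Appendix~\ref{sec:lower_end}. Let $d=d'=D$ and $\Phi_i=((e_ie_i^{T},0))$ for $i\in[D]$. Let $\Phi_{D+1}$ be the width-one chain of depth $L$ with $\cM(\Phi_{D+1})=L$ realizing $x\mapsto\rho(x_1)e_1$, and set $a_i=1$ for all $i$; the claimed bound is then $4D+4L$. Choose an open subset of $\{x_1>0\}$ on which each neuron's pre-activation is either positive throughout or nonpositive throughout. There the Jacobian $A_L\Lambda_{L-1}A_{L-1}\cdots\Lambda_1A_1$ (with diagonal $0/1$ matrices $\Lambda_\ell$) must equal $I+e_1e_1^{T}$, which has rank $D$. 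Hence every $A_\ell$ has at least $D$ nonzero entries and $\cM(\Psi)\geq DL$; for $D=L=9$ this gives $81>72$.

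(c) For $L=1$, taking $\Phi_1=\Phi_2=((1,0))$ and $a_1=a_2=1$ forces $\Psi=((2,0))$, so $\cB(\Psi)=2>1$. For $\Phi_1=((1,-1),(1,0))$, $a_1=\tfrac{1}{100}$, $L=2$, weights of size at most $\tfrac{1}{100}$ force at least $100$ neurons with a kink at $1$, so $\cM(\Psi)\geq200>10$.

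\textbf{What your construction does prove.} For $L\geq2$, route the last-layer biases of the unpadded $\Phi_i$ through constant neurons $\rho(1)$ in layer $L-1$ instead of summing them. Then you get $\cM(\Psi)\leq2\sum_i(\cM(\Phi_i)+d'L)$ and $\cB(\Psi)\leq\max_i\max\{1,|a_i|\}\max\{1,\cB(\Phi_i)\}$. This corrected version is all that Appendix~\ref{sec:lower_end} actually needs, since there $L\leq\pi(\log\cM)$ and only polynomial bounds on the weights are used.
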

\begin{proof}
    This follows directly by combining Lemmas II.4 and A.8 in \cite{DeepTheory}.
\end{proof}

\begin{lemma}\label{lem:MbGoodEnough_Riesz}
       Let $T\subseteq\N$, $c\in(0,\infty)$, $M\in\N$, and  
       $\cD=(\phi_i)_{i\in T}\subset\cH$ be $c$-Riesz, and $x\in \langle \cD\rangle_M$ with $\|x\|=1$. 
       Then there exists $j\in T$ such that
       \begin{align}\label{eq:Riesz_dict_boudn}
           |\langle x,\phi_j\rangle|\geq \sqrt{\frac{c}{M}}.
       \end{align}
    \end{lemma}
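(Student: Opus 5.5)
Since $x\in\langle\cD\rangle_M$, we can write $x=\sum_{i\in I}a_i\phi_i$ with $I\subseteq T$, $|I|\leq M$, and real coefficients $a_i$. The plan is to test $x$ against itself and then use the lower Riesz bound. First,
\begin{align*}
    1=\|x\|^2=\Big\langle x,\sum_{i\in I}a_i\phi_i\Big\rangle=\sum_{i\in I}a_i\langle x,\phi_i\rangle\leq \Big(\sum_{i\in I}|a_i|^2\Big)^{\frac{1}{2}}\Big(\sum_{i\in I}|\langle x,\phi_i\rangle|^2\Big)^{\frac{1}{2}},
\end{align*}
by Cauchy--Schwarz in $\R^I$. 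Next, the $c$-Riesz property from \Cref{def:Riesz_dict}, applied to the finitely supported sequence $(a_i)_{i\in I}$ extended by zero, gives
\begin{align*}
    \sum_{i\in I}|a_i|^2\leq \tfrac{1}{c}\Big\|\sum_{i\in I}a_i\phi_i\Big\|^2=\tfrac{1}{c}.
\end{align*}
Combining the two displays yields $\sum_{i\in I}|\langle x,\phi_i\rangle|^2\geq c$.

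Finally, we pigeonhole over $I$. A sum of at most $M$ nonnegative terms that is at least $c$ must have some term satisfying $|\langle x,\phi_j\rangle|^2\geq c/|I|\geq c/M$. Taking square roots gives \eqref{eq:Riesz_dict_boudn} for some $j\in I\subseteq T$. The edge case $I=\emptyset$ cannot occur, because then $x=0$, which contradicts $\|x\|=1$.

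No step here is a real obstacle. The only point needing care is that the Riesz inequality bounds the coefficient norm by the norm of $x$, and not the other way around. That direction is exactly what is needed, since there is no upper frame bound available.
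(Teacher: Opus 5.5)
Your proof is correct and follows essentially the paper's argument: expand $1=\langle x,x\rangle$ in the coefficients and use the lower Riesz bound to get $\|a\|_2\le c^{-1/2}$. The only difference is cosmetic: the paper uses the $\ell^1$--$\ell^\infty$ H\"older inequality together with $\|a\|_1\le\sqrt{M}\,\|a\|_2$, while you use $\ell^2$ Cauchy--Schwarz and then pigeonhole over at most $M$ terms, which gives the same constant (and allowing $|I|\le M$ is, if anything, slightly more careful than the paper's $|I|=M$).
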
 
\begin{proof}
    As $x\in \langle \cD\rangle_M$, there exists $I\subseteq T$ with $|I|=M$ such that $x = \sum_{i\in I} \lambda_i\phi_i$. Due to the $c$-Riesz assumption and the fact that $\|\cdot\|_1 \leq \sqrt{M}\|\cdot\|_2$ on $\R^M$, we have
    \begin{align*}
        \sum_{i\in I}|\lambda_i|\leq \sqrt{M}\|\lambda\|_2 \leq \sqrt{\tfrac{M}{c}}. 
    \end{align*}
    Thus we have 
    \begin{align*}
        1 = |\<x,x\>|=|\sum_{i\in I}\lambda_i \<x,\phi_i\>|\leq (\max_{i\in I}|\<x,\phi_i\>|)\sum_{i\in I}|\lambda_i|\leq  \sqrt{\tfrac{M}{c}}\max_{i\in I}|\<x,\phi_i\>|.
    \end{align*}
    This ensures that \eqref{eq:Riesz_dict_boudn} is satisfied for $j\in \argmax_{i\in I}|\<x,\phi_i\>|.$

\end{proof}

\begin{lemma}\label{lem:aux1}
Let $\delta\in\R$, $d,k\in\N$ with $d<k$, let $\cH$ be a Hilbert space, let $(f_j)_{j=1}^k\subseteq \cH$ be an orthogonal family with $\|f_j\|_{\cH} = \delta$ for $j\in\{1,\dots,k\}$, and let $S\subseteq \cH$ be a $d$-dimensional linear subspace. Then
\begin{align*}
    \max_{j\in[k]}\min_{s\in S}\|f_j-s\|_{\cH}\geq \delta(1-\sqrt{\tfrac{d}{k}}).
\end{align*}

\end{lemma}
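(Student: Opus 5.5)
Let $P$ denote the orthogonal projection onto $S$. The plan is to bound the total squared distance of the family from $S$, which is cleanly computable via this projection, and then pass from the average to the maximum. Since $\min_{s\in S}\|f_j-s\|=\|f_j-Pf_j\|$, and $\|f_j-Pf_j\|^2=\delta^2-\|Pf_j\|^2$ by Pythagoras, it suffices to show that $\min_{j\in[k]}\|Pf_j\|^2\le \delta^2 \tfrac{d}{k}$. The case $\delta=0$ is trivial, and for $\delta<0$ we replace $\delta$ by $|\delta|$ (the bound only weakens if $\delta<0$, since then the right-hand side is nonpositive). So assume $\delta>0$ and normalize $e_j:=f_j/\delta$, an orthonormal family.

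The key step is a trace estimate. Choose an orthonormal basis $u_1,\dots,u_d$ of $S$. Then
\begin{align*}
\sum_{j=1}^k\|Pe_j\|^2=\sum_{j=1}^k\sum_{l=1}^d|\<e_j,u_l\>|^2=\sum_{l=1}^d\sum_{j=1}^k|\<u_l,e_j\>|^2\le \sum_{l=1}^d\|u_l\|^2=d,
\end{align*}
where the inequality is Bessel's inequality for the orthonormal system $(e_j)_{j=1}^k$. Hence some $j$ has $\|Pe_j\|^2\le \tfrac{d}{k}$. For this $j$,
\begin{align*}
\|f_j-Pf_j\|=\delta\sqrt{1-\|Pe_j\|^2}\ge\delta\sqrt{1-\tfrac dk}\ge \delta(1-\sqrt{\tfrac dk}),
\end{align*}
using $\sqrt{1-t}\ge 1-\sqrt t$ for $t\in[0,1]$, which holds because $(1-\sqrt t)^2=1-2\sqrt t+t\le 1-t$ whenever $t\le\sqrt t$. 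Taking the maximum over $j$ gives the claim; the assumption $d<k$ keeps $t=\tfrac dk\in[0,1)$.

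There is no real obstacle here. The only point requiring care is swapping the sums and applying Bessel's inequality in the right direction, i.e., to the $u_l$ against the orthonormal $e_j$, rather than trying to bound each $\|Pe_j\|$ individually, which fails in general. The result is in fact slightly stronger than stated: the rate is $\sqrt{1-d/k}$ rather than $1-\sqrt{d/k}$.
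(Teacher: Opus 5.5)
Your proof is correct and uses the same averaging idea as the paper: the total squared projection $\sum_{j=1}^k\|Pf_j\|^2$ is at most $\delta^2 d$, so some $j$ lies at or below the average (in the paper this appears as $\sum_{i,j}|a_{i,j}|^2=d/\delta^2$ for the coefficients of an orthonormal basis of $S$ expanded in the $f_j$). Your execution is cleaner in two places: Bessel's inequality makes the paper's preliminary reduction to $S\subseteq\langle f_1,\dots,f_k\rangle$ unnecessary, and Pythagoras in place of the paper's triangle inequality $\|f_j-s_j\|\geq\|f_j\|-\|s_j\|$ gives the sharper bound $\delta\sqrt{1-d/k}$.
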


\begin{proof}
We first consider the case where $S\subseteq \<f_1,\dots,f_k\>$. Let $e_1,\dots,e_d$ be an orthonormal basis of $S$ and note that there are $a_{i,j}\in \R$, $i\in[d]$, $j\in[k]$, such that, for $i\in[d]$, we have
\begin{align}\label{phi_rep}
    e_i=\sum_{j=1}^k a_{i,j} f_j.
\end{align}
Next, we define, for every $j\in[k]$,
\begin{align*}
s_j:=\argmin_{s\in S}\|f_j-s\|_{\cH}
\end{align*}
and note that it is given by the orthogonal projection of $f_j$ into $S$, i.e.\@ we have
\begin{align*}
    s_j
    =\sum_{i=1}^d\ip{f_j,e_i}e_i
    =\sum_{i=1}^d\<f_j,\sum_{\ell=1}^k a_{i,\ell} f_\ell\>e_i
    =\sum_{i=1}^d\left(\sum_{\ell=1}^k a_{i,\ell}\ip{f_j,f_\ell} \right)e_i
    =\sum_{i=1}^d \|f_j\|_{\cH}^2 a_{i,j}e_i.
\end{align*}
Consequently we have, for every $j\in[k]$, that
\begin{align*}
    \|f_j-s_j\|_{\cH} 
    \geq \|f_j\|_{\cH} - \|f_j\|_{\cH}^2 \|\sum_{i=1}^d a_{i,j}e_i\|_{\cH} 
    =\delta -\delta^2(\sum_{i=1}^d |a_{i,j}|^2)^{\frac{1}{2}}
\end{align*}
We proceed by claiming that there must exist a $j\in[k]$ such that $\sum_{i=1}^d|a_{i,j}|^2\leq \tfrac{d}{\delta^2 k}$. Assume, towards a proof by contradiction, that $\sum_{i=1}^d|a_{i,j}|^2 > \tfrac{d}{\delta^2 k}$ for every $j\in[k]$. 
Note that \eqref{phi_rep} ensures $\sum_{j=1}^k |a_{i,j}|^2=\tfrac{1}{\delta^2}$. Thus we get
\begin{align*}
    \tfrac{d}{\delta^2}
    =k\tfrac{d}{\delta^2 k} 
    < \sum_{j=1}^k\sum_{i=1}^d|a_{i,j}|^2
    =\sum_{i=1}^d \sum_{j=1}^k|a_{i,j}|^2
    = d\sum_{j=1}^k|a_{i,j}|^2 
    =\tfrac{d}{\delta^2},
\end{align*}
which is a contradiction and thus proves our claim. As a consequence of this there exists $j\in[k]$ for which
\begin{align*}
        \|f_j-s_j\|_{\cH} \geq \delta -\delta^2 (\tfrac{d}{\delta^2 k})^{\tfrac{1}{2}} = \delta (1-\sqrt{\tfrac{d}{k}}),
\end{align*}
which concludes the proof in the case that $S\subseteq F:=\<f_1,\dots,f_k\>$.\\
In order to reduce the general case to the former, note that for an arbitrary $d$-dimensional linear subspace $S\subseteq \cH$ and every $s\in S$, it holds that 
\begin{align*}
    \|f_j-P_F(s)\|_{\cH} =\| P_F (f_j - s)\|_{\cH}\leq\|P_F\|_{\text{op}}\|f_j-s\|\leq\|f_j-s\| ,
\end{align*}
where $P_F$ is the orthogonal projection into $F$ and $\|\cdot\|_{\text{op}}$ the usual operator norm, which is bounded by $1$ for orthogonal projections. 
Thus we have that 
\begin{align*}
     \max_{j\in[k]}\min_{s\in S}\|f_j-s\|_{\cH} \geq  \max_{j\in[k]}\min_{s\in P_F(S)}\|f_j-s\|_{\cH},
\end{align*}
which completes the proof. 
\end{proof}

\begin{proof} [Proof of \Cref{lem:main_technical}]
         Let $y$ be the orthogonal projection of $x$ into $\langle z_1,\dots,z_T \rangle$ and note that $\|y\|_2\leq \|x\|_2=1$ and $|\<y,z_j\>|=|\<x,z_j\>|$ for all $j\in[T]$. Since $y \in \langle z_1,\dots,z_T \rangle$ there exists $\lambda=(\lambda_1,\dots,\lambda_T)\in\R^T$ such that
         \begin{align*}
             y=\sum_{j=1}^T \lambda_j z_j.
         \end{align*}
         W.l.o.g.\@ we assume that $|\lambda_1|\geq|\lambda_2|\geq\dots\geq|\lambda_T|$ and note that 
         \begin{align*}
             \sum_{\substack{i,j\in[T]\\i\neq j}}|\lambda_i \lambda_j|= 2\sum_{i=1}^{T-1}\sum_{j=i+1}^T |\lambda_i \lambda_j|\leq 2\sum_{i=1}^{T-1} (T-i)|\lambda_i|^2\leq 2T\|\lambda\|^2_2.
         \end{align*}
         Consequently we have
         \begin{align*}
             1
             &\geq \|y\|_2^2
             =\<\sum_{j=1}^T \lambda_j z_j,\sum_{j=1}^T \lambda_j z_j\>\\
             &=\sum_{j=1}^T (\lambda_j)^2 \<z_j,z_j\> + \sum_{\substack{i,j\in[T]\\i\neq j}}\lambda_i \lambda_j \<z_i,z_j\>\\
             &\geq \sum_{j=1}^T (\lambda_j)^2 - \sum_{\substack{i,j\in[T]\\i\neq j}}|\lambda_i \lambda_j \<z_i,z_j\>|\\
             &\geq \|\lambda\|^2_2 - 2T^{1-\theta}\|\lambda\|^2_2,             
         \end{align*}
         which implies
         \begin{align}\label{eq:LambdaEst}
             \|\lambda\|^2_2\leq \frac{1}{1-2T^{1-\theta}}.
         \end{align}
         Next we note that $|\lambda_1|\geq|\lambda_2|\geq\dots\geq|\lambda_T|$ implies $|\lambda_T|\leq T^{-\frac{1}{2}}\|\lambda\|_2$. Denoting by $P$ the orthogonal projection onto $\<z_T\>$, we obtain      
         \begin{align*}
             |\<y,z_T\>|&=|\sum_{j=1}^T \lambda_j \<z_j,z_T\>| \leq |\lambda_T| + T^{-\theta}\sum_{j=1}^{T-1} |\lambda_j|
             \leq |\lambda_T| + T^{-\theta} \|\lambda\|_1
             \leq T^{-\frac{1}{2}}\|\lambda\|_2 + T^{\frac{1}{2}-\theta} \|\lambda\|_2,
         \end{align*}
         where we used that $\|\lambda\|_1\leq T^{\frac{1}{2}}\|\lambda\|_2$, for $\lambda\in\R^{T}$, and that $\|P z_j\|_2=|\<z_T,z_j\>|\leq T^{-\theta}$, for $j\in[T-1]$, by assumption.
         Combining this with \eqref{eq:LambdaEst} yields
        \begin{align*}
            |\<y,z_T\>|\leq T^{-\frac{1}{2}}\frac{1+T^{1
            -\theta}}{(1-2T^{1-\theta})^{\frac{1}{2}}}\leq 4 T^{-\frac{1}{2}},
        \end{align*}
        since $1+T^{1-\theta}\leq 2$ and $(1-2T^{1-\theta})^{\frac{1}{2}}\geq \tfrac{1}{2}$ due to the assumption of $T\geq 4^{\frac{1}{\theta-1}}$.
    \end{proof}

\bibliography{bib}
\bibliographystyle{ieeetr}
\end{document}